\crefname{hypothesis}{Hypothesis}{Hypotheses}
\title{Low-rank Tensor Train Decomposition Using TensorSketch\thanks{
This work was partially supported by National Natural Science Foundation of China (No. 12071104) and Natural Science Foundation of Zhejiang Province (No. LD19A010002, No. LY22A010012).}}
\author{Zhongming Chen\thanks{Department of Mathematics, School of Sciences, Hangzhou Dianzi University, Hangzhou 310018, China
  (\email{zmchen@hdu.edu.cn}, \email{hlin\_j@163.com}, \email{maghyu@163.com}).}
\and Huilin Jiang\footnotemark[2]
\and Gaohang Yu\footnotemark[2]
\and Liqun Qi\footnotemark[2] \thanks{Department of Applied Mathematics, The Hong Kong Polytechnic University, Hung Hom, Kowloon, Hong Kong (maqilq@polyu.edu.hk).}
}
\begin{document}

\maketitle

\begin{abstract}
Tensor train decomposition is one of the most powerful approaches for processing high-dimensional data. For low-rank tensor train decomposition of large tensors, the alternating least squares (ALS) algorithm is widely used by updating each core tensor alternatively. However, it may suffer from the curse of dimensionality due to the large scale of subproblems. In this paper, a novel randomized proximal ALS algorithm is proposed for low-rank tensor train decomposition by using TensorSketch, which allows for efficient implementation via fast Fourier transform. The theoretical lower bounds of sketch size are estimated for approximating the optimal value of subproblems. Numerical experiments on synthetic and real-world data also demonstrate the effectiveness and efficiency of the proposed algorithm.
\end{abstract}

\begin{keywords}
tensor train decomposition, randomized algorithm, proximal regularization, TensorSketch
\end{keywords}

\begin{MSCcodes}
15A69, 68W20, 49M27
\end{MSCcodes}

\section{Introduction}
\label{sec_1}
Tensors are multi-dimensional arrays and generalizations of matrices to higher orders, which could be regarded as natural representations of large-scale data arising from chemometrics, statistics, data science, etc. In practical applications, one of the important tasks is to mine the low-dimensional structure hidden behind the tensors. Tensor decompositions \cite{KB09} are powerful tools for compressing, approximating, as well as extracting important features from high-dimensional data, and are widely used in signal processing \cite{DD98}, data mining \cite{KS08}, computer vision \cite{PKC21} and machine learning \cite{AGH14, CBS17}. The main tensor decompositions include CP decomposition \cite{H27}, Tucker decomposition \cite{T66}, tensor train (TT) decomposition \cite{O11}, tenor ring decomposition \cite{ZZX16} and so on. CP decomposition provides a useful way to factorize a tensor into the sum of rank-1 tensors. Unfortunately, it is not reliable due to the difficulty of determining the number of rank-1 components. Tucker decomposition is more stable than CP decomposition, but it suffers from the curse of dimensionality. On the other hand, TT decomposition is not affected by the curse of dimensionality and is more reliable. In this paper, we mainly focus on TT decomposition which is becoming increasingly popular due to its stability and efficiency. 

The tensor train decomposition can decompose a large tensor into the product of a series of third-order tensors. One direct way to compute low-rank TT decomposition is called TT-SVD \cite{O11}, which is based on the truncated singular value decomposition (SVD) of auxiliary unfolding matrices. Another widely used method based on optimization is called TT-ALS \cite{HRS12}, which updates each core tensor alternatively by solving corresponding least squares problem. However, both methods may suffer from the curse of dimensionality. In other words, the computation cost of both methods goes exponentially with the order of tensors, which is impractical for large-scale problems. As datasets grow larger and larger, there is an increasing need for methods to handle them. One possible solution to the challenge is the use of randomization, which has proven to be effective in computing the low-rank approximations of large-scale matrices \cite{HMT11, M11, TYUC17, BYDW18, MT20}.

In the realm of tensor decomposition, different randomized techniques have been applied to accelerate the low-rank approximations of tensors \cite{BBK18, EMBK20, MSK20, CWY20, CWY21, AAA21, DYQC23}. For TT decomposition, Huber et al. proposed randomized TT decomposition which is a robust alternative to the classical deterministic TT-SVD algorithm at low computational expenses \cite{HSW17}. Che et al. proposed an adaptive randomized algorithm for computing the tensor train approximations of tensors \cite{CW19}. To make full use of TT format, Shi et al. proposed parallelizable sketching algorithms that compute the low-rank TT decomposition from various tensor inputs \cite{SRT23}. Yu et al. presented a randomized algorithm for low-rank tensor train approximation of tensors based on randomized block Krylov subspace iteration \cite{YFCCQ23}.  It is worth mentioning that most of methods are based on the randomized SVD for matrices \cite{HMT11}, where the random Gaussian matrices are used. For large-scale tensors, this kind of methods is bottlenecked by the operation called the {\it tensor-times-matrix-chains}. To alleviate the computation cost, many works \cite{W14, ANW14, DSSW18} has led to the technique of TensorSketch which is ideally suited for sketching Kronecker products. In this way, the random matrix is very sparse and the accuracy could be also guaranteed with high probability. Recently, the technique of TensorSketch has been used for computing low-rank approximations of CP decomposition \cite{WTS15}, Tucker decomposition \cite{MB18, MS21} and tensor ring decomposition \cite{MB21, YL22a}. The main idea of these randomized algorithms is using TensorSketch to sketch the subproblems of  alternating least squares (ALS). However, the classic ALS has some drawbacks. One deficiency of ALS method is the swamp effect where plenty of iterations make the decrease of objective function almost null. Besides, the solution of the sketched ALS subproblem might be not unique. It is necessary to add a regularization term to the ALS subproblem. Motivated by the work of \cite{NDK08, LKN13}, we apply TensorSketch to compute the low-rank TT decomposition based on the regularized alternating least squares. The regularization term is actually a proximal term that penalizes the difference between the solution and the current iterate. Our contributions and the notations used in this paper are listed in Subsections \ref{sec_1-1} and \ref{sec_1-2}, respectively.

\subsection{Our Contributions}
\label{sec_1-1}
In this paper, we propose a new randomized proximal ALS algorithm for low-rank TT decomposition by using TensorSketch. Based on the regularized ALS, we incorporate TensorSketch to approximate the solution of large-scale subproblems rapidly, while the accuracy could be also guaranteed with sufficient sketch size. In summary, this paper makes the following contributions:
\begin{itemize}
    \item Based on the regularized ALS, a novel randomized algorithm is proposed for low-rank TT decomposition by using TensorSketch.
    \item The algorithm allows for efficient implementation via fast Fourier transform. The theoretical lower bounds of sketch size are estimated for approximating the optimal value of subproblems.
    \item Numerical experiments on synthetic and real-world data also demonstrate the effectiveness and efficiency of the proposed algorithm.
\end{itemize}

\subsection{Notations}
\label{sec_1-2}
Throughout this paper, scalars are denoted by lower case letters, e.g. $x$; vectors are denoted by bold lower case letters, e.g. $\bf x$; matrices are denoted by capital letters, e.g. $X$; tensors of order 3 or higher are denoted by calligraphic letters, e.g. $\mathcal{X}$.
For any positive integer $n$, denote $[n] = \{1, 2, \ldots, n \}$. For any matrix $A \in \mathbb{R}^{m \times n}$, the $i$th row vector and $j$th column vector of $A$ are denoted by $A(i,:)$ and $A(:,j)$, respectively. The Kronecker product of two matrices is denoted with “$\otimes$”. The identity matrix of size $n \times n$ is denoted by $I_n$. For any 3rd-order tensor $\mathcal{A} \in \mathbb{R}^{n_1 \times n_2 \times n_3}$, the $i$th slice of $\mathcal{A}$ is denoted by $\mathcal{A}(i) \in \mathbb{R}^{n_1 \times n_3}$, the left unfolding $\mathcal{A}^{\text{L}} \in \mathbb{R}^{n_1 n_2 \times n_3}$ is defined as
$ \mathcal{A}^{\text{L}}(i_1 + (i_2-1)n_1, i_3) = \mathcal{A}(i_1, i_2, i_3)$
and the right unfolding $\mathcal{A}^{\text{R}} \in \mathbb{R}^{n_1 \times n_2 n_3}$ is defined as
$\mathcal{A}^{\text{R}}(i_1,  i_2 + (i_3-1)n_2) = \mathcal{A}(i_1, i_2, i_3).$
For any tensor $ \mathcal{A} \in \mathbb{R}^{n_{1} \times n_{2} \times \cdots \times n_{d} }$, the mode-$k$ matricization $\mathcal{A}_{(k)}\in \mathbb{R}^{n_{k} \times \prod_{i\ne k} {n_{i}} }$ is defined as 
$\mathcal{A}_{(k)}(i_k,  j) = \mathcal{A}(i_1, i_2, \ldots, i_d),$
where 
$ j = 1+ \sum_{\substack{ s=1 \\ s\ne k}}^d(i_s-1) \prod_{\substack{t=1  \\  t\ne s}}^{s-1} n_t .$
The Frobenius norm of $ \mathcal{A} \in \mathbb{R}^{n_{1} \times n_{2} \times \cdots \times n_{d} }$ is defined as $\| {\cal A}\|_F = \sqrt{\sum_{i_1=1}^{n_1} \sum_{i_2=1}^{n_2} \cdots \sum_{i_d=1}^{n_d} \mathcal{A}(i_1, i_2, \ldots, i_d)^2 }.$

\begin{definition}[$k$-mode product \cite{KB09}]
The $k$-mode product of a tensor $\mathcal{X} \in \mathbb{R}^{n_1 \times n_2 \times \cdots \times n_d}$ and a matrix $A \in \mathbb{R}^{m \times n_k}$ is denoted by $\mathcal{X} \times_k A$ and is of size \\ $ {n_1 \times \cdots \times n_{k-1} \times m \times n_{k+1} \times \cdots \times n_d}$ with each element given by
$$(\mathcal{X} \times_k A)(i_1, \ldots, i_{k-1}, j ,i_{k+1}, \ldots, i_d) = \sum_{i_k=1}^{n_k} \mathcal{X} (i_1, \ldots, i_{k-1}, i_k ,i_{k+1}, \ldots, i_d) A(j, i_k) .$$
\end{definition}

\begin{definition}[Face-splitting product \cite{S97}]
Given $A \in \mathbb{R}^{m \times n_1}$ and $B \in \mathbb{R}^{m \times n_2}$, the face-splitting product $C = A \square B \in \mathbb{R}^{m \times n_1 n_2}$ is defined by the row-wise Kronecker product of matrices $A$ and $B$, i.e.,
$ C(i,:) = A(i,:) \otimes B(i,:)$ for any $i \in [m].$
\end{definition}

\begin{definition}[Slice-wise product]
Given $\mathcal{A} \in \mathbb{R}^{r_1 \times n \times r_2}$ and $\mathcal{B} \in \mathbb{R}^{r_2 \times n \times r_3}$, the slice-wise product $\mathcal{C} = \mathcal{A} \star \mathcal{B} \in \mathbb{R}^{r_1 \times n \times r_3}$ is defined by the slice-wise product of tensors $\mathcal{A}$ and $\mathcal{B}$, i.e.,
$ \mathcal{C}(i) = \mathcal{A}(i)  \mathcal{B}(i)$ for any $i \in  [n].$
\end{definition}

The above notations are summarized in \Cref{tab1}. 
The rest of this paper is organized as follows. We review some backgrounds on tensor train decomposition and TensorSketch in \Cref{sec_2}. In \Cref{sec_3}, we propose the randomized proximal ALS algorithm for low-rank TT decomposition and derive the fast computation of TensorSketch for proximal TT-ALS. The accuracy of TensorSketch for proximal TT-ALS is established in \Cref{sec_4}. In \Cref{sec_5}, numerical experiments for synthetic and real-world problems are presented to show the validity of proposed algorithm. Finally, the conclusions are drawn in \Cref{sec_6}.
\begin{table}[htbp]
\footnotesize
\caption{Description of notations.} \label{tab1}
\centering
\begin{tabular}{cc}
\hline \hline
\bf Notation & \bf Meaning \\ \hline
$x$ & Scalar \\ \hline
$\bf x$ & Vector \\ \hline
$X$ & Matrix \\ \hline
$\mathcal{X}$ & $d$th-order tensor $(d \ge 3)$ \\ \hline
$\left[ n \right]$ & The set $\{ 1, 2, \ldots, n\}$ \\ \hline
$A \left(i,:\right)$ & The $i$th row vector of matrix $A$ \\ \hline
$A \left(:,j\right)$ & The $j$th column vector of matrix $A$ \\ \hline
$\otimes$ & Kronecker product \\ \hline
$I_{n}$ & Identity matrix of size $n \times n$ \\ \hline
$\mathcal{A}(i)$ & The $i$th slice of 3rd-order tensor $\mathcal{A}$  \\ \hline
$\mathcal{A}^{\text{L}}$ & The left unfolding of 3rd-order tensor $\mathcal{A}$  \\ \hline
$\mathcal{A}^{\text{R}}$ & The right unfolding of 3rd-order tensor $\mathcal{A}$   \\ \hline
$\mathcal{A}_{(k)}$ & The mode-$k$ matricization of tensor $\mathcal{A}$  \\ \hline
$\left\| \cdot \right\|_{F}$ & Frobenius norm \\ \hline
$\times_{k}$ & $k-$mode product \\ \hline
$\square$ & Face-splitting product \\ \hline
$\star$ & Slice-wise product \\ \hline
\end{tabular}
\end{table}

\section{Backgrounds}
\label{sec_2}
\subsection{Tensor Train Decomposition}
A real $d$th-order tensor is a multidimensional array ${\cal A} \in \mathbb{R}^{n_1 \times n_2 \times \cdots \times n_d}$ that can be regarded as an extension of a matrix to its general $d$th order.
The challenge is that the number of tensor elements grows exponentially in $d$.
Even if each dimension (i.e. the number of possible values of each index) of a tensor is small, the storage cost for all elements is prohibitive for large $d$. The tensor train decomposition \cite{O11} gives an efficient way (in storage and computation) to alleviate so-called {\it curse of dimensionality}.

The main idea of TT decomposition is to re-express each element of a tensor ${\cal A} \in \mathbb{R}^{n_1 \times n_2 \times \cdots \times n_d} $ as
\begin{equation*}\label{e2.1}
{\cal A}(i_1, i_2, \cdots, i_d) = {\cal G}_1(i_1) {\cal G}_2(i_2) \cdots {\cal G}_d(i_d) ,
\end{equation*}
where ${\cal G}_k \in \mathbb{R}^{r_{k-1} \times n_k \times r_k}, k=1,2,\ldots, d$ are called TT-cores.
To make the matrix-by-matrix product a scalar, we set $r_0 = r_d = 1$.
The quantities $r_k$ are called TT-ranks. In fact, each core ${\cal G}_k$ is a third-order tensor with dimensions $r_{k-1}$, $n_k$ and $r_k$.
The tensor $\cal A$ is also denoted by
$ {\cal A} = [\![ {\cal G}_1,  {\cal G}_2, \ldots, {\cal G}_d ]\!]. $
Let $n = \max \{ n_1, n_2, \ldots, n_d \}$. It turns out that if all TT ranks are bounded by $r$, the storage of tensor train is $O(d n r^2 )$, which does not grow exponentially with $d$. The numerical stability of TT decomposition comes from the process of left and right orthogonalization \cite{O11}. Figure \ref{fig_1} illustrates the TT decomposition of a third-order tensor ${\cal A} \in \mathbb{R}^{n_1 \times n_2 \times n_3}$.

\begin{figure}[htbp]
	\centering
	\includegraphics[width=0.9\linewidth]{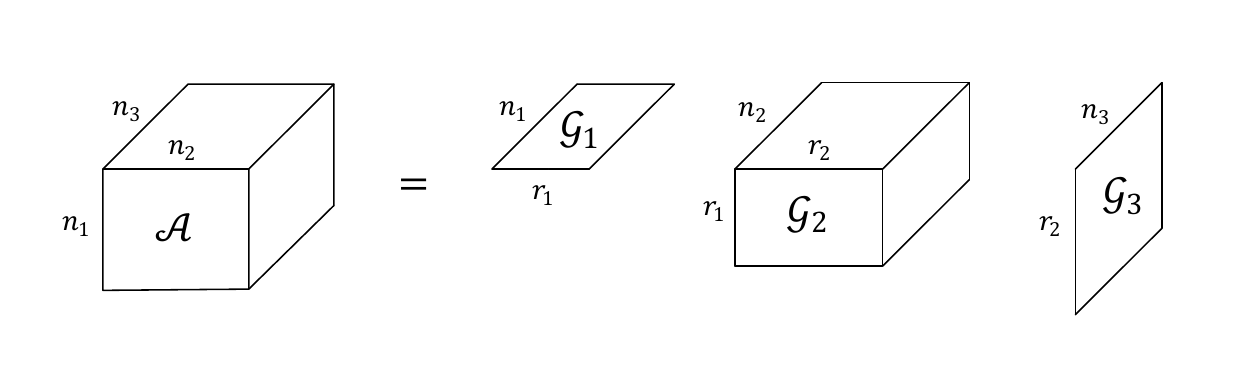}
	\caption{TT decomposition for ${\cal A} \in \mathbb{R}^{n_1 \times n_2 \times n_3}$}
	\label{fig_1}
\end{figure}

\subsection{TensorSketch}
TensorSketch is a variant of CountSketch that is designed specifically for tensors. It restricts the hash map to a specific format, enabling fast multiplication of the sketching matrix with the chain of Kronecker products. The hash map in TensorSketch maps the indices of the tensor to a format that allows for efficient multiplication with the sketching matrix. This enables the algorithm to compute the sketch of a tensor quickly for various tasks, such as tensor decomposition and regression. The use of TensorSketch can reduce the computational cost and memory requirements of tensor-based algorithms significantly. 
Before introducing the definition of TensorSketch, we first give the definitions of CountSketch and $k$-wise independent has map. For more details, the readers are referred to \cite{KN14, P13}.

\begin{definition}[CountSketch]
The CountSketch matrix is definied as $S=\Omega D\in \mathbb{R}^{m\times n} $, where
\begin{itemize}
    \item[(1)] $h: [n] \to [m]$ is a hash map such that $Pr[h(i) = j] =\frac{1}{m}$ for all $i\in [n]$ and  $j \in [m]$.
    \item[(2)] $\Omega \in \mathbb{R}^{m\times n} $ is a matrix with $\Omega(j,i)=1$ if $j = h(i)$ and $\Omega(j, i) = 0$ otherwise.
    \item[(3)] $D \in \mathbb{R}^{n\times n}$ is a diagonal matrix with diagonal a Rademacher vector $v \in \mathbb{R}^n$ (each entry is $+1$ or $-1$ with equal probability).
\end{itemize}
\end{definition}

\begin{definition}[$k$-wise independent]
A hash map $h: [n] \to [m]$ is called $k$-wise independent if the hash code of any fixed $i \in [n]$ is uniformly distributed in $[m]$, and the hash codes $h(i_1), h(i_2), \ldots, h(i_k)$ are independent random variables for any distinct $i_1, i_2, \ldots, i_k \in [n]$.
\end{definition}

Note that there is a bijection between the set of indices $i \in \left[ \prod_{k=1}^d n_k\right]$ and the $d$-tuples $(i_1, i_2, \ldots, i_d) \in  [n_{1}]\times [n_2] \times \cdots \times [n_{d}]$ according to the lexicographic order.
For simplicity, we use the notation $f(i) = f(i_{1},...,i_{d})$ for the funtion $f$ on the domain $[n_{1}]\times [n_2] \times \cdots \times [n_{d}]$, where $i =  1+ \sum_{s=1}^d(i_s-1) \prod_{t=1}^{s-1} n_t $.

\begin{definition}[TensorSketch]
The order $d$ TensorSketch matrix is defined as $S=\Omega D\in \mathbb{R}^{m\times \prod_{k=1}^{d} n_{k}}$, where
\begin{itemize}
    \item[$\bullet$] $h: [n_{1}]\times [n_2] \times \cdots \times [n_{d}]\to [m]$ is the hash map
    \begin{equation} \label{eqn_h}
    h(i_{1}, i_2, \ldots,i_{d}) =  \left( \sum_{k=1}^{d} \left( h_{k}(i_{k})-1 \right) \quad \text{mod} \ m \right)+1, \end{equation}
    where $h_k: [n_k] \to [m]$ is a 3-wise independent hash map for $k= 1, 2, \ldots, d$.
    \item[$\bullet$] $\Omega \in \mathbb{R}^{m\times \prod_{k=1}^{d} n_{k}} $ is a matrix with $\Omega(j,i)=1$ if $j = h(i)$ and $\Omega(j, i) = 0$ otherwise.
    \item[$\bullet$] $D \in \mathbb{R}^{\prod_{k=1}^{d} n_{k} \times \prod_{k=1}^{d} n_{k}}$ is a diagonal matrix with diagonal vector $v \in \mathbb{R}^{\prod_{k=1}^{d} n_{k}}$ given by
    \begin{equation} \label{eqn_v}
    v(i_1, i_2, \ldots, i_d) =  \prod_{k=1}^{d}v_{k}(i_{k}), \end{equation}
    where $v_k: [n_k] \to \{-1, 1\} $ is a 4-wise independent hash map for $k= 1, 2, \ldots, d$.
\end{itemize}
\end{definition}

To show the construction of TensorSketch matrix, an example is presented for the case $m=2, n_1=2, n_2=2$. Assume that the hash maps $h_1$ and $h_2$ are given by $h_1(1)=1$, $h_1(2)=2$ and $h_2(1)=1$, $h_2(2)=2$. By definition, the hash map $h$ is given by $h(1)=h(1,1)=1$, $h(2)=h(2,1)=2$, $h(3)=h(1,2)=2$, $h(4)=h(2,2)=1$. It follows that the corresponding matrix $ \Omega = 
       \begin{bmatrix}
         1 & 0 & 0 & 1 \\
         0 & 1 & 1 & 0
       \end{bmatrix}. $
If the Rademacher vectors $v_1$ and $v_2$ are given by $v_1 = (1, -1)^\top$ and $v_2=(-1, 1)^\top$, the diagonal vector $v$ is defined as $v=(-1, 1, 1, -1)^\top$. As a result, the TensorSketch matrix $S \in \mathbb{R}^{2 \times 4}$ is constructed as
$ S = \begin{bmatrix}
        -1 &  0 &  0 & -1 \\
         0 &  1 &  1 &  0
       \end{bmatrix}.$

It is well-known that if $h_k$ is 3-wise independent for $k \in [d]$, the hash map $h$ constructed in TensorSketch is also 3-wise independent \cite{CW79, PT12}. It is worth mentionting that the TensorSketch matrix is very sparse and does not need to be constructed in the full form. As we can see later, the sketch of a tensor with TT format could be implemented efficiently via fast Fourier transform.

\section{Fast computation of TensorSketch for proximal TT-ALS}
\label{sec_3}
Given a tensor ${\cal A} \in \mathbb{R}^{n_1 \times n_2 \times \cdots \times n_d}$ and TT-ranks $\{ r_k\}_{k=0}^d$, the goal of low-rank tensor train decomposition is to minimize the objective function
\begin{equation}\label{lrtt}
f(\mathcal{G}_{1}, \mathcal{G}_{2}, \ldots, \mathcal{G}_{d})=\frac{1}{2}\left\| [\![\mathcal{G}_{1}, \mathcal{G}_{2}, \ldots, \mathcal{G}_{d}]\!] - \mathcal{A} \right\|_{F}^{2},
\end{equation}
where $\mathcal{G}_{k} \in \mathbb{R}^{r_{k-1} \times n_k \times r_{k} }$ for $k \in [d]$ and $r_{0}=r_{d}=1$. To minimize (\ref{lrtt}), TT-ALS is the most widely used algorithm which updates each TT-core alternatively while the other TT-cores are fixed \cite{HRS12}. To be specific, for $k \in [d]$, the TT-core $\mathcal{G}_{k}$ is updated by solving the corresponding least squares problem, i.e., 
\begin{equation}\label{lrtt_sub} 
      \begin{split}
        \min_{{\cal G}_k} \quad &  \frac{1}{2} \left\|  \left( G_{>k}^\top  \otimes G_{<k} \right) {\cal G}_{k(2)}^\top - {\cal A}_{(k)}^\top \right\|_F^2  
      \end{split}
\end{equation} 
where $G_{<k} = \texttt{reshape}\left( [\![ {\cal G}_1, {\cal G}_2, \ldots, {\cal G}_{k-1}  ]\!], \prod_{i<k} n_i, r_{k-1}\right)$ and \\ $G_{>k} = \texttt{reshape}\left( [\![ {\cal G}_{k+1}, {\cal G}_{k+2}, \ldots, {\cal G}_{d}  ]\!], r_k, \prod_{i>k} n_i  \right).$ Here we define $G_{<1} = G_{>d} = 1$.
Under mild assumptions, the local convergence of TT-ALS is guaranteed \cite{RU13}. 
However, the solution of (\ref{lrtt_sub}) may be not unique. Here we consider TT-ALS with proximal regularization:
    \begin{equation}\label{lrtt_sub_prox} 
      \begin{split}
        \min_{{\cal G}_k} \quad &  \frac{1}{2} \left\|  \left( G_{>k}^\top  \otimes G_{<k} \right) {\cal G}_{k(2)}^\top - {\cal A}_{(k)}^\top \right\|_F^2  + \frac{\sigma}{2} \left\| {\cal G}_k - {\cal G}_k^{(t)} \right\|_F^2 .
      \end{split}
    \end{equation} 

However, the cost of solving subproblem (\ref{lrtt_sub_prox}) is $O(n^d r^2)$ which is impractical for large-scale problems, where $n=\max\{n_1, n_2, \ldots, n_d\}$ and $r=\max\{ r_0, r_1, \ldots, r_d \}$. The idea is to find a sketching matrix $S \in \mathbb{R}^{m \times \prod_{i \ne k}n_i}$ to solve the sketched proximal least squares problem:
    \begin{equation}\label{lrtt_sub4} 
      \begin{split}
        \min_{{\cal G}_k} \quad &  \frac{1}{2} \left\|  S H_k {\cal G}_{k(2)}^\top - S {\cal A}_{(k)}^\top \right\|_F^2  + \frac{\sigma}{2} \left\| {\cal G}_k - {\cal G}_k^{(t)} \right\|_F^2 
      \end{split},
    \end{equation} 
    where $H_k = G_{>k}^\top  \otimes G_{<k}$ and ${\cal G}_k^{(t)}$ denotes ${\cal G}_k$ at the $t$th iteration. It follows that 
    \begin{equation}\label{ite} {\cal G}_{k(2)}^{(t+1)} = \left( {\cal A}_{(k)} S S^\top H_k + \sigma {\cal G}_{k(2)}^{(t)}  \right) \left( H_k^\top S^\top S H_k + \sigma I \right)^{-1}  .\end{equation}
We can see that if $\sigma >0$, (\ref{ite}) is always well-defined even though $H_k^\top S^\top S H_k$ is singular. To make the computation practical, the choice of the sktching matrix $S$ should satisfy two requirements. First, the computation of $S \left( G_{>k}^\top  \otimes G_{<k} \right) $ and $S {\cal A}_{(k)}^\top$ could be implemented efficiently. Second, the solution of (\ref{lrtt_sub4}) should be near the exact solution of (\ref{lrtt_sub_prox}).

Here we use TensorSketch to constructure the sketching matrix $S$ in (\ref{lrtt_sub4}). TensorSketch is a special type of CountSketch, where the hash map is restricted to a special format to allow fast multiplication of the sketching matrix with the chain of Kronecker products. In this section, we show the fast computation of TensorSketch for proximal TT-ALS. The accuracy of TensorSketch for proximal TT-ALS will be shown in the next section. 
Denote by $F \in \mathbb{R}^{m \times m}$ the Fourier transform matrix, i.e., $$ F = 
       \begin{bmatrix}
         1     & 1       & \cdots & 1          \\
         1     & w     & \cdots & w^{m-1}  \\
        \vdots & \vdots  & \ddots & \vdots     \\
        1      & w^{m-1} & \cdots & w^{2(m-1)}
       \end{bmatrix} $$
where $w = e^{-{\bf i} \cdot 2\pi  /m}$. 

\begin{lemma}\label{lem1}
    Let $S \in \mathbb{R}^{m \times n_1 n_2}$ be the TensorSketch matrix generated by CountSketch matrices $S_1 \in \mathbb{R}^{m \times n_1 }$ and $S_2 \in \mathbb{R}^{m \times  n_2}$. It holds that $$ S = F^{-1} [(FS_2) \square (FS_1)],$$
    where $F \in \mathbb{R}^{m \times m}$ is the Fourier transform matrix.
\end{lemma}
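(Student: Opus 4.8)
The plan is to prove the identity one column at a time, showing that for every index pair $(i_1,i_2)$ the corresponding column of $S$ agrees with that of $F^{-1}[(FS_2)\square(FS_1)]$. First I would record the explicit form of the columns involved. By the definition of CountSketch, the column of $S_k$ indexed by $i_k$ is the scaled standard basis vector $v_k(i_k)\,e_{h_k(i_k)}$; applying $F$ and using $F(j,l)=w^{(j-1)(l-1)}$ gives the scalar entries $(FS_k)(j,i_k)=v_k(i_k)\,w^{(j-1)(h_k(i_k)-1)}$. In the same way, the column of the TensorSketch matrix $S$ indexed by the tuple $(i_1,i_2)$ is $v(i_1,i_2)\,e_{h(i_1,i_2)}$, where $v(i_1,i_2)=v_1(i_1)v_2(i_2)$ by (\ref{eqn_v}) and $h$ is given by (\ref{eqn_h}).

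Second, I would compute the matching column of the face-splitting product. Since $A\square B$ acts row-wise through the Kronecker product, and the Kronecker ordering of a pair $(i_1,i_2)$ reproduces the lexicographic column index $i_1+(i_2-1)n_1$ used throughout the paper, the $(i_1,i_2)$ column of $(FS_2)\square(FS_1)$ has $j$th entry $(FS_2)(j,i_2)(FS_1)(j,i_1)$. Substituting the expressions from the first step and collecting the exponents yields
$$ v_1(i_1)v_2(i_2)\,w^{(j-1)\left[(h_1(i_1)-1)+(h_2(i_2)-1)\right]} . $$

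The crux of the argument is the third step: recognizing this vector (as a function of $j$) as a scalar multiple of a single column of $F$. Because $w^m=1$, the exponent $(h_1(i_1)-1)+(h_2(i_2)-1)$ may be reduced modulo $m$ without altering any entry, and by the definition of $h$ in (\ref{eqn_h}) the reduced value is exactly $h(i_1,i_2)-1$. Hence the column equals $v(i_1,i_2)\,F(:,h(i_1,i_2))$. Applying $F^{-1}$ then returns $v(i_1,i_2)\,e_{h(i_1,i_2)}$, which is precisely the $(i_1,i_2)$ column of $S$ identified in the first step; comparing all columns gives $S=F^{-1}[(FS_2)\square(FS_1)]$.

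I expect the main obstacle to be bookkeeping rather than anything conceptual. One must align the column ordering induced by the face-splitting (row-wise Kronecker) product with the lexicographic tuple ordering that fixes the columns of $S$, and then justify the modular reduction of the Fourier exponent via the periodicity $w^m=1$ so that the additive hash map in (\ref{eqn_h}) matches the multiplicative structure of the roots of unity. Once these two conventions are pinned down, the claim follows from the direct column-by-column comparison above.
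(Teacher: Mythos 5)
Your proof is correct and takes essentially the same route as the paper's: both arguments verify the identity entrywise using the explicit CountSketch/TensorSketch structure, with the key step being the periodicity $w^m=1$ that reconciles the additive, modular hash map in (\ref{eqn_h}) with the multiplicative structure of the roots of unity. The only cosmetic difference is that you compare columns of $(FS_2)\square(FS_1)$ against scaled columns of $F$ and then apply $F^{-1}$, whereas the paper computes the entries of $FS$ directly and factors them row-wise; these are the same computation read in opposite directions.
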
 
\begin{proof} For $i=1,2$, let $h_i$ and $v_i$ be the hash map and Rademacher vector of CountSketch matrix $S_i$, respectively. The hash maps $h$ and $v$ of TensorSketch matrix $S$ are constructed by (\ref{eqn_h}) and (\ref{eqn_v}) respectively. For $s \in [m]$ and $i \in [n_1 n_2]$, by definition, the $(s, i)$th element of $FS$ could be written as $v(i) w_s^{h(i)-1}$, where $w_s = e^{-{\bf i} \cdot 2\pi (s-1) /m }$. Since $w_s^m = 1$, it follows that
\begin{subequations}
\begin{align*}
 (FS)(s, i) & =  v(i) w_s^{h(i)-1} = v(i_1, i_2) w_s^{h_1(i_1) + h_2(i_2) -2} \\ 
 & = v_1(i_1) w_s^{h_1(i_1)-1} v_2(i_2) w_s^{h_2(i_2) -1} = (FS_1)(s, i_1) (FS_2)(s, i_2), 
\end{align*}
\end{subequations}
where $(i_1, i_2) \in [n_1] \times [n_2]$ is the pair corresponding to $i \in [n_1 n_2]$ according to the lexicographic order. This means $(FS)(s, :) = (FS_2)(s, :) \otimes (FS_1)(s, :)$ for $ s \in [m]$. So we have $FS = (FS_2) \square (FS_1)$, which completes the proof.
\end{proof}

Similarly, we have the following corollaries. Here we use the facts that the face-splitting product satisfies the associative law and $(D \square C)(B \otimes A) = (DB) \square (CA)$ for $A\in \mathbb{R}^{n_1 \times r_1}$, $B\in \mathbb{R}^{n_2 \times r_2}$, $C \in \mathbb{R}^{m \times n_1}$ and $D \in \mathbb{R}^{m \times n_2}$ \cite{S99}.
\begin{corollary}\label{cor1}
    Let $S \in \mathbb{R}^{m \times n_1 n_2 \cdots n_d}$ be the TensorSketch matrix generated by CountSketch matrices CountSketch matrices $S_k \in \mathbb{R}^{m \times n_k }$, $k \in [d]$. It holds that $$ S = F^{-1} [(FS_d) \square  (FS_{d-1}) \square \cdots \square (FS_1)],$$
    where $F \in \mathbb{R}^{m \times m}$ is the Fourier transform matrix.
\end{corollary}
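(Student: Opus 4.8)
The plan is to prove the formula by induction on the order $d$, taking \Cref{lem1} as the base case $d=2$ (the case $d=1$ being the trivial identity $S=F^{-1}(FS_1)$). Of the two facts quoted just before the statement, only the associativity of the face-splitting product is actually needed for this corollary; the identity $(D\square C)(B\otimes A)=(DB)\square(CA)$ will instead be used when sketching the Kronecker-structured design matrix $H_k$. Associativity is what lets me collapse an iterated product $(FS_d)\square\cdots\square(FS_1)$ unambiguously into a single expression.

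First I would set up the inductive step. Assume the claim holds for order $d-1$, and let $S$ be the order-$d$ TensorSketch generated by $S_1,\ldots,S_d$, with hash map $h$ and sign vector $v$ built from the component maps $h_k,v_k$ via (\ref{eqn_h}) and (\ref{eqn_v}). The crucial observation is that the order-$(d-1)$ TensorSketch $S'$ generated by $S_1,\ldots,S_{d-1}$ is itself a CountSketch matrix on the index set $[n_1\cdots n_{d-1}]$: its hash map $h'(i_1,\ldots,i_{d-1})=(\sum_{k=1}^{d-1}(h_k(i_k)-1)\bmod m)+1$ has the required uniform marginals, and its diagonal sign vector is $v'(i_1,\ldots,i_{d-1})=\prod_{k=1}^{d-1}v_k(i_k)$. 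Since the desired identity is a purely algebraic statement about fixed realizations of these maps, no independence properties of $h'$ or $v'$ are needed to apply \Cref{lem1} to $S'$.

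I would then verify that $S$ is \emph{exactly} the order-$2$ TensorSketch generated by the two CountSketch matrices $S'$ and $S_d$. This reduces to checking that the defining maps nest recursively: from (\ref{eqn_h}), $h(i_1,\ldots,i_d)=((h'(i_1,\ldots,i_{d-1})-1)+(h_d(i_d)-1)\bmod m)+1$, which is precisely the order-$2$ form built from $h'$ and $h_d$ because any extra multiple of $m$ produced by the inner reduction is absorbed by the outer reduction mod $m$; and from (\ref{eqn_v}), $v(i_1,\ldots,i_d)=v'(i_1,\ldots,i_{d-1})\,v_d(i_d)$, which matches the order-$2$ sign vector built from $v'$ and $v_d$. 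Applying \Cref{lem1} to the pair $(S',S_d)$ then gives $S=F^{-1}[(FS_d)\square(FS')]$. Feeding in the inductive hypothesis $FS'=(FS_{d-1})\square\cdots\square(FS_1)$ and invoking associativity of $\square$ to flatten $(FS_d)\square((FS_{d-1})\square\cdots\square(FS_1))$ yields the claimed formula.

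I expect the main obstacle to be the bookkeeping in the third paragraph: one must confirm that the lexicographic bijection which fuses $(i_1,\ldots,i_{d-1})$ into a single composite index is compatible with the index convention of \Cref{lem1}, and that the double modular reduction genuinely nests as asserted. An alternative that sidesteps the induction is to repeat the row-wise computation from the proof of \Cref{lem1} directly for $d$ factors: for each $s\in[m]$, $(FS)(s,i)=v(i)w_s^{h(i)-1}=\prod_{k=1}^d v_k(i_k)w_s^{h_k(i_k)-1}=\prod_{k=1}^d (FS_k)(s,i_k)$, using $w_s^m=1$, which is exactly the entry at position $i$ of the iterated Kronecker product $(FS_d)(s,:)\otimes\cdots\otimes(FS_1)(s,:)$; this establishes $FS=(FS_d)\square\cdots\square(FS_1)$ in one stroke.
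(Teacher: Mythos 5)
Your proposal is correct. Note that the paper itself gives no explicit proof of this corollary: it only remarks that the corollaries follow ``similarly'' to \Cref{lem1}, using associativity of the face-splitting product. The argument that remark points to is exactly your one-stroke alternative at the end: redo the row-wise computation from the proof of \Cref{lem1} with $d$ factors, using $w_s^m=1$ to split $w_s^{h(i)-1}$ into $\prod_{k=1}^d w_s^{h_k(i_k)-1}$, and recognize the resulting entrywise product as the iterated Kronecker product of the rows $(FS_d)(s,:)\otimes\cdots\otimes(FS_1)(s,:)$, associativity making the iterated $\square$ unambiguous. Your primary inductive route is a genuinely different, more structural formalization: its key content is that the order-$d$ TensorSketch generated by $S_1,\ldots,S_d$ coincides with the order-$2$ TensorSketch generated by the pair $(S',S_d)$, where $S'$ is the order-$(d-1)$ TensorSketch of $S_1,\ldots,S_{d-1}$. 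Your verification of this nesting is sound --- the outer reduction mod $m$ absorbs the inner one, the sign vectors multiply, and the lexicographic fusion of $(i_1,\ldots,i_{d-1})$ into a single index is compatible with the paper's convention $i=1+\sum_s (i_s-1)\prod_{t<s}n_t$ --- and so is your remark that no independence properties of $h'$ or $v'$ are needed, since the identity is purely algebraic for fixed realizations of the hash and sign maps (the proof of \Cref{lem1} never invokes independence). The induction buys a rigorous reduction to the already-proved two-factor case at the price of this bookkeeping; the direct $d$-factor computation is shorter and is what the paper intends.
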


\begin{corollary}\label{cor2}
    Let $S \in \mathbb{R}^{m \times n_1 n_2}$ be the TensorSketch matrix generated by CountSketch matrices $S_1 \in \mathbb{R}^{m \times n_1 }$ and $S_2 \in \mathbb{R}^{m \times  n_2}$. It holds that $$ S(B \otimes A)=F^{-1}[(F S_2 B) \square (F S_{1} A)] $$ for any $A\in \mathbb{R}^{n_1 \times r_{1}}$ and $B \in \mathbb{R}^{ n_2 \times r_{2}}$, where $F \in \mathbb{R}^{m \times m}$ is the Fourier transform matrix.
\end{corollary}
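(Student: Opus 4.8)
The plan is to obtain the identity as a direct consequence of \Cref{lem1} together with the mixed-product rule $(D \square C)(B \otimes A) = (DB) \square (CA)$ that is quoted immediately before the corollary. First I would invoke \Cref{lem1} to replace $S$ by its Fourier factorization, writing $S = F^{-1}[(FS_2) \square (FS_1)]$, so that $S(B \otimes A) = F^{-1}[(FS_2) \square (FS_1)](B \otimes A)$. Since the constant prefactor $F^{-1}$ does not interact with the right-hand factor $B \otimes A$, the problem reduces entirely to simplifying the product $[(FS_2) \square (FS_1)](B \otimes A)$.

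The key step is then to apply the interchange law $(D \square C)(B \otimes A) = (DB) \square (CA)$ with the identification $C = FS_1 \in \mathbb{R}^{m \times n_1}$ and $D = FS_2 \in \mathbb{R}^{m \times n_2}$. Before applying it I would check that the shapes match the hypotheses of the rule: here $A \in \mathbb{R}^{n_1 \times r_1}$ and $B \in \mathbb{R}^{n_2 \times r_2}$ are exactly the required dimensions, so $CA = FS_1 A$ and $DB = FS_2 B$ are well defined and $(FS_2) \square (FS_1)$ multiplies $B \otimes A$ conformably. The rule then yields $[(FS_2) \square (FS_1)](B \otimes A) = (FS_2 B) \square (FS_1 A)$, and re-inserting $F^{-1}$ gives precisely $S(B \otimes A) = F^{-1}[(FS_2 B) \square (FS_1 A)]$, completing the argument.

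There is essentially no genuine obstacle beyond bookkeeping: the only point needing care is that the index ordering implicit in the Kronecker product $B \otimes A$ must match the lexicographic convention used to build $S$ (with $i_1$ varying fastest), so that the outer factor $S_2$ pairs with $B$ and the inner factor $S_1$ pairs with $A$. This is already the convention underlying \Cref{lem1}, so once the outer/inner roles are assigned consistently the identity is immediate, and the corollary is a one-line consequence of \Cref{lem1} and the cited mixed-product rule.
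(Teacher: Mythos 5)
Your proposal is correct and matches the paper's intended argument exactly: the paper derives \Cref{cor2} from \Cref{lem1} together with the cited interchange rule $(D \square C)(B \otimes A) = (DB) \square (CA)$, which is precisely your two steps. Your additional care about the lexicographic ordering (so that $S_2$ pairs with $B$ and $S_1$ with $A$) is consistent with the convention established in the proof of \Cref{lem1}.
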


\begin{theorem}\label{thm1}
    Let $S \in \mathbb{R}^{m \times \prod_{i \in [d] \setminus \{ k\}} n_i} $ be the TensorSketch matrix generated by CountSketch matrices $S_i \in \mathbb{R}^{m \times n_i }$, $i \in [d]\setminus \{k\}$. It holds that 
     $$ S(G_{> k}^{\top}\otimes G_{< k}) =F^{-1}[(F S_{> k}G_{>k}^{\top})\square (F S_{< k}G_{< k})], $$
     where $S_{<k} \in \mathbb{R}^{m \times n_1 \cdots n_{k-1}}$ is the TensorSketch matrix generated by CountSketch matrices $\{ S_i\}_{i<k}$ and $S_{>k}  \in \mathbb{R}^{m \times n_{k+1} \cdots n_d}$ is the TensorSketch matrix generated by CountSketch matrices $\{ S_i\}_{i>k}$.
\end{theorem}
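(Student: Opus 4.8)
The plan is to reduce the statement to the two-block mixed-product identity $(D \square C)(B \otimes A) = (DB) \square (CA)$ recorded just before \Cref{cor1}. The bridge is to show that, after left multiplication by $F$, the full TensorSketch $S$ over the index set $[d] \setminus \{k\}$ collapses to a single face-splitting product of the two block sketches $FS_{>k}$ and $FS_{<k}$; once this is in hand the remainder is a direct substitution.

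First I would apply \Cref{cor1} to the sketch $S$ generated by $\{S_i\}_{i \in [d]\setminus\{k\}}$, which gives
$$FS = (FS_d)\square \cdots \square (FS_{k+1}) \square (FS_{k-1}) \square \cdots \square (FS_1).$$
Because the face-splitting product is associative, I can group the factors of index larger than $k$ and those of index smaller than $k$ separately,
$$FS = \bigl[(FS_d)\square \cdots \square (FS_{k+1})\bigr] \square \bigl[(FS_{k-1})\square \cdots \square (FS_1)\bigr].$$
Applying \Cref{cor1} once more to each block—to $\{S_i\}_{i>k}$ and to $\{S_i\}_{i<k}$—identifies the two bracketed factors as $FS_{>k}$ and $FS_{<k}$, so that $FS = (FS_{>k}) \square (FS_{<k})$.

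With this factorization I would left multiply the target by $F$ and substitute to obtain
$$F\,S\bigl(G_{>k}^\top \otimes G_{<k}\bigr) = \bigl[(FS_{>k}) \square (FS_{<k})\bigr]\bigl(G_{>k}^\top \otimes G_{<k}\bigr),$$
and then invoke the mixed-product identity with $D = FS_{>k}$, $C = FS_{<k}$, $B = G_{>k}^\top$, $A = G_{<k}$, whose block sizes match since $FS_{>k} \in \mathbb{R}^{m \times n_{k+1}\cdots n_d}$ multiplies $G_{>k}^\top \in \mathbb{R}^{n_{k+1}\cdots n_d \times r_k}$ and similarly for the $<k$ block. This yields $F\,S(G_{>k}^\top \otimes G_{<k}) = (FS_{>k}G_{>k}^\top)\square(FS_{<k}G_{<k})$, and left multiplying by $F^{-1}$ delivers the claimed identity.

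The step requiring the most care is the regrouping into $<k$ and $>k$ blocks: I must confirm it is consistent with the definition of the combined hash map (\ref{eqn_h}) and Rademacher vector (\ref{eqn_v}), and with the lexicographic flattening of indices. Since the hash codes are summed modulo $m$, reducing the partial sums $\sum_{i<k}(h_i-1)$ and $\sum_{i>k}(h_i-1)$ first and then combining must give the same value as the single modular reduction of $\sum_{i\neq k}(h_i-1)$; the sign factorization in (\ref{eqn_v}) is immediate as a plain product. I would also verify that the row ordering of the Kronecker product $G_{>k}^\top \otimes G_{<k}$, in which the $G_{<k}$ block varies fastest, matches the lexicographic order indexing the columns of $S$. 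Once these bookkeeping points are settled, associativity of $\square$ and the mixed-product identity complete the proof with no further computation.
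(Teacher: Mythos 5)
Your proposal is correct and follows essentially the same route as the paper's own proof: apply \Cref{cor1} to factor $FS$ into face-splitting products, regroup (via associativity) into $(FS_{>k})\square(FS_{<k})$, apply the mixed-product identity $(D \square C)(B \otimes A) = (DB)\square(CA)$, and multiply by $F^{-1}$. Your explicit invocation of the mixed-product identity in place of the paper's citation of \Cref{cor2}, and your attention to the index-ordering bookkeeping, only make the same argument slightly more self-contained.
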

\begin{proof} According to Corollary \ref{cor1} and Corollary \ref{cor2}, one could obtain that
\begin{eqnarray*}
    FS(G_{> k}^{\top}\otimes G_{< k}) &=& [(FS_d) \square \cdots \square (FS_{k+1}) \square (FS_{k-1}) \square \cdots \square (FS_1)] (G_{> k}^{\top}\otimes G_{< k}) \\
    &=& [(FS_{>k}) \square (FS_{<k})] (G_{> k}^{\top}\otimes G_{< k}) \\
    &=&  (F S_{> k}G_{>k}^{\top})\square (F S_{< k}G_{< k}). 
\end{eqnarray*}
By left multiplying the matrix $F^{-1}$ on both sides of the equation above, we get the desired conclusion.
\end{proof}

By making full use of the structure of TensorSketch matrix and TT decomposition, the computations of $F S_{< k}G_{< k}$ and $F S_{> k}G_{>k}^{\top}$ could be divided into the slice-wise products of corresponding TT-cores, respectively. It turns out that the computation of $S \left( G_{>k}^\top  \otimes G_{<k} \right)$ could be implemented efficiently via fast Fourier transform.

\begin{theorem}\label{thm2}
Let $S_{<k}$ and $S_{>k}$ be the TensorSketch matrices defined as Theorem \ref{thm1}. It holds that
$$F S_{<k}G_{<k}=\left[(\mathcal{G}_{1}\times_{2}F  S_1) \star (\mathcal{G}_{2}\times_{2}F S_2)\star \cdots \star (\mathcal{G}_{k-1}\times_{2}F S_{k-1}) \right]^{\text{L}}$$ and $$F S_{>k}G_{>k}^{\top}=\left[\left( (\mathcal{G}_{k+1}\times_{2} F S_{k+1}) \star (\mathcal{G}_{k+2}\times_{2}F S_{k+2}) \star \cdots \star (\mathcal{G}_{d}\times_{2}F S_d )\right)^{\text{R}}\right]^{\top}.$$
\end{theorem}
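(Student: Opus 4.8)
The plan is to verify both identities one row at a time, i.e. slice by slice over $s \in [m]$, reducing each to the expansion of a matrix product over multi-indices. I would begin with the first identity. Write $\mathcal{C} := (\mathcal{G}_1 \times_{2} FS_1) \star \cdots \star (\mathcal{G}_{k-1} \times_{2} FS_{k-1}) \in \mathbb{R}^{1 \times m \times r_{k-1}}$ and fix $s \in [m]$. By the definition of the $2$-mode product, the $s$-th slice of each factor is $(\mathcal{G}_j \times_{2} FS_j)(s) = \sum_{i_j}(FS_j)(s,i_j)\,\mathcal{G}_j(i_j)$, so by the definition of the slice-wise product the $s$-th slice of $\mathcal{C}$ is the matrix product $\prod_{j=1}^{k-1}\left(\sum_{i_j}(FS_j)(s,i_j)\,\mathcal{G}_j(i_j)\right)$, which expands to $\sum_{(i_1,\ldots,i_{k-1})}\left(\prod_{j=1}^{k-1}(FS_j)(s,i_j)\right)\mathcal{G}_1(i_1)\cdots\mathcal{G}_{k-1}(i_{k-1})$.

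On the other side I would compute the $s$-th row of $FS_{<k}G_{<k}$. By \Cref{cor1}, $FS_{<k} = (FS_{k-1})\,\square\,\cdots\,\square\,(FS_1)$, so its $s$-th row is the Kronecker product $(FS_{k-1})(s,:)\otimes\cdots\otimes(FS_1)(s,:)$, whose entry indexed by the multi-index $(i_1,\ldots,i_{k-1})$ equals $\prod_{j=1}^{k-1}(FS_j)(s,i_j)$. Since the row of $G_{<k}$ carrying the same multi-index is precisely the entry $[\![{\cal G}_1,\ldots,{\cal G}_{k-1}]\!](i_1,\ldots,i_{k-1})=\mathcal{G}_1(i_1)\cdots\mathcal{G}_{k-1}(i_{k-1})$ of the partial contraction before reshaping, the row $(FS_{<k})(s,:)\,G_{<k}$ reproduces exactly the sum obtained above. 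Finally, because $\mathcal{C}$ has first dimension $1$, its left unfolding satisfies $\mathcal{C}^{\text{L}}(s,:) = \mathcal{C}(s)$, which matches $(FS_{<k}G_{<k})(s,:)$ for every $s$; this establishes the first identity.

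For the second identity I would argue symmetrically. Setting $\mathcal{D} := (\mathcal{G}_{k+1}\times_{2} FS_{k+1})\star\cdots\star(\mathcal{G}_d\times_{2} FS_d)\in\mathbb{R}^{r_k\times m\times 1}$, the same slice computation gives $\mathcal{D}(s) = \sum_{(i_{k+1},\ldots,i_d)}\left(\prod_{j=k+1}^{d}(FS_j)(s,i_j)\right)\mathcal{G}_{k+1}(i_{k+1})\cdots\mathcal{G}_d(i_d)$, a column vector in $\mathbb{R}^{r_k}$ since $r_d=1$. Using \Cref{cor1} in the form $FS_{>k}=(FS_d)\,\square\,\cdots\,\square\,(FS_{k+1})$ together with the fact that the column of $G_{>k}$ with multi-index $(i_{k+1},\ldots,i_d)$ is $\mathcal{G}_{k+1}(i_{k+1})\cdots\mathcal{G}_d(i_d)$, the $s$-th row of $FS_{>k}G_{>k}^{\top}$ equals $\mathcal{D}(s)^{\top}$. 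Because $\mathcal{D}$ has third dimension $1$, its right unfolding obeys $\mathcal{D}^{\text{R}}(:,s)=\mathcal{D}(s)$, so $\bigl([\mathcal{D}^{\text{R}}]^{\top}\bigr)(s,:)=\mathcal{D}(s)^{\top}$, which is the claim.

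The step I expect to require the most care is the index bookkeeping that matches the lexicographic order of the \texttt{reshape} defining $G_{<k}$ and $G_{>k}$ with the ordering induced by the iterated face-splitting (Kronecker) products; this is exactly what forces the reversed ordering $(FS_{k-1})\,\square\,\cdots\,\square\,(FS_1)$ in \Cref{cor1}, and I would state explicitly that the multi-index $(i_1,\ldots,i_{k-1})$ (respectively $(i_{k+1},\ldots,i_d)$) selects the same entry on both sides. Everything else is a routine expansion of the telescoping matrix products and an unwinding of the left/right unfolding conventions, so no further obstacle is anticipated.
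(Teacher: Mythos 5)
Your proof is correct and follows essentially the same route as the paper's: a row-by-row (slice-by-slice over $s \in [m]$) verification that reduces both sides to the same multilinear sum over multi-indices $(i_1,\ldots,i_{k-1})$, combined with the same unfolding and lexicographic-order bookkeeping. The only cosmetic difference is that you invoke \Cref{cor1} to factor $(FS_{<k})(s,:)$ as a Kronecker product of the rows $(FS_j)(s,:)$, whereas the paper re-derives that factorization inline from the hash-map and Rademacher-vector definitions (via $w_s^m=1$); since \Cref{cor1} encapsulates exactly that computation, the mathematical content is identical, and your version is if anything a slightly cleaner modularization.
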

\begin{proof} We only prove the first equality since the second equality could be proved similarly.
For $i \in [d]\setminus \{k\}$, let $h_i$ and $v_i$ be the hash map and Rademacher vector of CountSketch matrix $S_i$, respectively. Since $S_{<k}$ is the TensorSketch matrix generated by $\{ S_i\}_{i<k}$, the hash maps $h_{<k}$ and $v_{<k}$ of $S_{<k}$ are constructed from $\{ h_i\}_{i<k}$ and $\{ v_i\}_{i<k}$ according to (\ref{eqn_h}) and (\ref{eqn_v}) respectively. By definition, one could obtain that for $s \in [m]$, the $s$th row of $F S_{<k}G_{<k}$ could be written as 
\begin{eqnarray*}
    (F S_{<k}G_{<k})(s, :) &=& \sum_{i=1}^{n_1 \cdots n_{k-1}} v_{<k}(i) w_s^{h_{<k}(i)-1} G_{<k} (i, :) \\
    &=& \sum_{i_1=1}^{n_1} \cdots \sum_{i_{k-1}=1}^{n_{k-1}} v_1(i_1) \cdots v_{k-1}(i_{k-1}) w_s^{h_1(i_1) + \cdots + h_{k-1}(i_{k-1})-k+1} {\cal G}_1(i_1) \cdots {\cal G}_{k-1}(i_{k-1})  \\
    &=& \sum_{i_1=1}^{n_1} v_1(i_1) w_s^{h_1(i_1)-1} {\cal G}_1(i_1) \cdots \sum_{i_k=1}^{n_{k-1}} v_{k-1}(i_{k-1}) w_s^{h_{k-1}(i_{k-1})-1} {\cal G}_{k-1}(i_{k-1}) \\
    &=& (\mathcal{G}_{1}\times_{2}F  S_1)(s) \cdots (\mathcal{G}_{k-1}\times_{2}F S_{k-1})(s)
\end{eqnarray*}
where $w_s = e^{-{\bf i} \cdot 2\pi (s-1) /m }$ and $(i_1, \ldots, i_{k-1}) \in [n_1] \times \cdots \times [n_{k-1}]$ is the tuple corresponding to $i \in \left[ \prod_{i<k} n_i \right]$ according to the lexicographic order. So the first equality holds.
\end{proof}

\begin{theorem}\label{thm3}
    Let $n=\max\{n_1, n_2, \ldots, n_d\}$ and $r=\max\{ r_0, r_1, \ldots, r_d \}$. The products of $F S_{> k}G_{>k}^{\top} $ and $F S_{< k}G_{< k}$ could be computed at a cost of $O\left((m+m\log m+n)(d-1)r^2 \right)$. As a result, the computation cost of $S \left( G_{>k}^\top  \otimes G_{<k} \right) $ is $O\left( (m+m\log m) d r^2 +n(d-1)r^2 \right)$.
\end{theorem}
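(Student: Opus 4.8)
The plan is to read off from Theorem \ref{thm2} the concrete sequence of operations needed to form $FS_{<k}G_{<k}$ and $FS_{>k}G_{>k}^\top$, and to cost each operation separately. By Theorem \ref{thm2}, both quantities are assembled from the modified cores $\mathcal{G}_i \times_2 (FS_i) \in \mathbb{R}^{r_{i-1} \times m \times r_i}$ through a chain of slice-wise products $\star$ followed by a cost-free reshaping. I would therefore split the total work into three parts: (i) forming each $\mathcal{G}_i \times_2 (FS_i)$; (ii) evaluating the chain of slice-wise products; and (iii) the final assembly from Theorem \ref{thm1}.

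For part (i), the crucial point is that one should apply the sparse CountSketch factor before the dense Fourier factor. Writing $S_i = \Omega_i D_i$ as a sparse hash matrix times a diagonal sign matrix, the product $\mathcal{G}_i \times_2 S_i$ merely redistributes the $n_i$ mode-2 slices of $\mathcal{G}_i$ into $m$ buckets with sign flips, costing $O(n_i r_{i-1} r_i) = O(n r^2)$ and producing a tensor whose mode-2 size is $m$; applying $F$ in mode $2$ is then $r_{i-1} r_i$ independent length-$m$ FFTs, costing $O(m \log m \cdot r^2)$. (Multiplying instead by the dense matrix $FS_i$ directly would cost $O(mn r^2)$, so the ordering matters.) Summing over the $d-1$ cores with $i \ne k$ gives $O((n + m\log m)(d-1) r^2)$ for this part.

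For part (ii), I would exploit the boundary conditions $r_0 = r_d = 1$. In $FS_{<k}G_{<k}$ the chain is accumulated from the left, so the running product always has leading dimension $r_0 = 1$, i.e. each of its $m$ slices is a row vector; multiplying it slice-wise by the next core $\mathcal{G}_j \times_2 (FS_j)$ amounts to $m$ matrix-vector products of size $O(r^2)$, hence $O(m r^2)$ per step. Accumulating $FS_{>k}G_{>k}^\top$ from the right keeps the trailing dimension equal to $r_d = 1$ and is symmetric. Since the two chains together involve $d-1$ cores and therefore $O(d-1)$ slice-wise multiplications, part (ii) costs $O(m(d-1) r^2)$. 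Combining parts (i) and (ii) yields the first claim, $O((m + m\log m + n)(d-1) r^2)$.

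Finally, for the second claim I would invoke Theorem \ref{thm1}, namely $S(G_{>k}^\top \otimes G_{<k}) = F^{-1}[(FS_{>k}G_{>k}^\top) \square (FS_{<k}G_{<k})]$. Given the two factors of sizes $m \times r_k$ and $m \times r_{k-1}$, the face-splitting product costs $O(m r^2)$ (one length-$r^2$ Kronecker product per row), and the final $F^{-1}$ is $r_{k-1} r_k = O(r^2)$ inverse FFTs of length $m$, costing $O(m\log m \cdot r^2)$. Adding this $O((m + m\log m) r^2)$ overhead to the first claim gives $O((m + m\log m) d r^2 + n(d-1) r^2)$, the $(m + m\log m)$ terms collecting $d-1$ contributions from the two chains and one from the assembly. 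The main obstacle I anticipate is the bookkeeping in part (ii): one must argue carefully that fixing the association order of $\star$ so that the unit boundary rank is never multiplied away keeps every slice-wise step at $O(m r^2)$ rather than $O(m r^3)$, which is precisely what prevents an extra factor of $r$ from creeping into the bound.
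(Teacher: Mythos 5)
Your proposal is correct and follows essentially the same route as the paper's proof: form each modified core $\mathcal{G}_i \times_2 (FS_i)$ by applying the sparse CountSketch at cost $O(nr^2)$ and then FFTs at cost $O(r^2 m\log m)$, accumulate the slice-wise product chains at $O(mr^2)$ per step, and finish with the $O(mr^2)$ face-splitting product plus the $O(r^2 m\log m)$ inverse FFT via Theorem \ref{thm1}. In fact you are somewhat more careful than the paper on two points it leaves implicit --- applying $S_i$ before $F$ rather than multiplying by the dense matrix $FS_i$, and fixing the association order of the $\star$-chain so the unit boundary ranks $r_0=r_d=1$ keep each step at $O(mr^2)$ instead of $O(mr^3)$ --- but these are refinements of the same argument, not a different one.
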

\begin{proof} For $i\in [d]\setminus \{ k\}$, the computation cost of ${\cal G}_i \times_2 S_i$ is $O(nr^2)$ due to the structure of CountSketch matrices. It follows that the computation cost of ${\cal G}_i \times_2 F S_i$ is $O(nr^2+ r^2 m\log m )$. By recursion, the slice-wise products of $(\mathcal{G}_{1}\times_{2}F  S_1) \star \cdots \star (\mathcal{G}_{k-1}\times_{2}F S_{k-1})$ and $(\mathcal{G}_{k+1}\times_{2} F S_{k+1}) \star \cdots \star (\mathcal{G}_{d}\times_{2}F S_d )$ could be computed at the cost of $O\left(m(k-1)r^2\right)$ and $O\left(m(d-k)r^2 \right)$, respectively. By Theorem \ref{thm2}, the total computation cost of $F S_{> k}G_{>k}^{\top} $ and $F S_{< k}G_{< k}$ is $O\left((m+m\log m+n)(d-1)r^2 \right)$. Furthermore, the cost of face-splitting product $(F S_{> k}G_{>k}^{\top})\square (F S_{< k}G_{< k})$ is $O(mr^2)$. By Theorem \ref{thm1}, the total computation cost of $S \left( G_{>k}^\top  \otimes G_{<k} \right)$ is $O\left( (m+m\log m) d r^2 +n(d-1)r^2 \right)$ when adding the cost of inverse fast Fourier transform.
\end{proof}

The proposed algorithm for low-rank tensor train decomposition using TensorSketch is described in Algorithm \ref{alg_TT-TS}. It is worth mentioning that the computation cost of $S \left( G_{>k}^\top  \otimes G_{<k} \right)$ goes linearly with the order $d$, whereas the naive matrix multiplication would cost $O(mn^{d-1}r^2)$ which goes exponentially with the order $d$. The special structure of TensorSketch matrices makes the computation more practical for large-scale problems.
In fact, the mode products $\{ {\cal G}_i \times_2 F S_i \}_{i \in [d]\setminus \{k\}}$ could be also computed in parallel to reduce the cost. Moreover, there is no need to store the whole tensor $\cal A$ since only a few fibers are used to compute $S {\cal A}_{(k)}^\top$ in (\ref{lrtt_sub4}). In particular, if $\cal A$ is sparse, the computation cost of $S {\cal A}_{(k)}^\top$ is $O\left(nnz({\cal A})\right)$, where $nnz({\cal A})$ denotes the number of nonzero elements of $\cal A$.

\begin{algorithm}
\caption{Low-rank Tensor Train Decomposition using TensorSketch (TT-TS)}
\label{alg_TT-TS}
\begin{algorithmic}
\REQUIRE{ $\mathcal{A}\in \mathbb{R}^{n_{1}\times n_2 \cdots \times n_{d}}$, TT-ranks $\{ r_k \}_{k=1}^{d-1}$, sketch size $m$ and $\sigma > 0$ }
\ENSURE{ TT-cores $\{{\cal G}_k\}_{k=1}^d$ }
\STATE{Initialize TT-cores $\{{\cal G}_k\}_{k=1}^d$ of prescribed ranks }
\WHILE{termination condition is not satisfied }
\STATE{Execute right-to-left orthogonalizion for $\{{\cal G}_k\}_{k=1}^d$ }
\STATE{Define TensorSketch operators $S_k\in \mathbb{R}^{m\times n_k }$, $k\in [d]$ }
\FOR{$k = 1,2, \ldots, d $ }
\STATE{Compute $S(G_{> k}^{\top}\otimes G_{< k}) $ by $F^{-1}[(F S_{> k}G_{>k}^{\top})\square (F S_{< k}G_{< k})]$ }
\STATE{Compute $S {\cal A}_{(k)}^\top$ by using some fibers of $\cal A$ }
\STATE{Update ${\cal G}_k$ according to (\ref{ite})  }
\IF{$k<d$}
\STATE{$[Q, R] \leftarrow$ compute QR decomposition of $G_k^\text{L}$ }
\STATE{${\cal G}_k \leftarrow \texttt{reshape}(Q, r_{k-1}, n_k, r_k)$ }
\STATE{${\cal G}_{k+1} \leftarrow \texttt{reshape}(R G_{k+1}^\text{R} , r_k, n_{k+1}, r_{k+1})$ } 
\ENDIF
\ENDFOR
\ENDWHILE
\RETURN{$\mathcal{G}_{1}, {\cal G}_2, \ldots,\mathcal{G}_{d}$} 
\end{algorithmic}
\end{algorithm}

\section{Accuracy of TensorSketch for proximal TT-ALS}
\label{sec_4}
In this section, we start from the fact that TensorSketch is an oblivious subspace embedding to derive theoretical results of sketch size for approximating the optimal value of (\ref{lrtt_sub_prox}).  We first introduce that the approximate matrix product property of TensorSketch matrices. 

\begin{lemma}[Approximate matrix product \cite{ANW14}]\label{lem2} Let $S \in \mathbb{R}^{m \times n_1n_2 \cdots n_q}$ be a TensorSketch matrix generated by 3-wise independent hash maps $h_i: [n_i] \rightarrow [m]$ and 4-wise inpependent sign functions $v_i: [n_i] \rightarrow \{1, -1\}$, where $i=1,2,\ldots,q$.  Let $A$ and $B$ be matrices with $n_{1}n_{2}\cdots n_{q}$ rows. For $m\ge (2+3^{q})/(\epsilon_0^{2}\delta_0),$ we have
    $$Pr\left [ \left \| A^{\top}S^{\top}SB-A^{\top}B \right \|^{2}_{F}\le \epsilon_0^{2}\left \| A \right \|^{2}_{F}\left \| B \right \|_{F}^{2} \right ] \ge 1-\delta_0 ,$$
where $Pr[\cdot]$ denotes the probability of a random event.
\end{lemma}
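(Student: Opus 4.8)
The plan is to establish the claim by a second-moment estimate followed by Markov's inequality, after first reducing from matrices to pairs of vectors. Writing $A=[a_1,\ldots,a_p]$ and $B=[b_1,\ldots,b_{p'}]$ columnwise, we have $\|A^{\top}S^{\top}SB-A^{\top}B\|_F^2=\sum_{i,j}\bigl(a_i^{\top}S^{\top}Sb_j-a_i^{\top}b_j\bigr)^2$, so by linearity of expectation it suffices to bound $\mathbb{E}\bigl[(x^{\top}S^{\top}Sy-x^{\top}y)^2\bigr]$ for arbitrary vectors $x,y\in\mathbb{R}^{n_1\cdots n_q}$ and then sum. The per-pair target is a bound of the form $\frac{2+3^{q}}{m}\|x\|_2^2\|y\|_2^2$; summing over $i,j$ reproduces $\frac{2+3^{q}}{m}\|A\|_F^2\|B\|_F^2$ because $\sum_{i,j}\|a_i\|_2^2\|b_j\|_2^2=\|A\|_F^2\|B\|_F^2$. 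Applying Markov's inequality to the nonnegative random variable $\|A^{\top}S^{\top}SB-A^{\top}B\|_F^2$ then bounds the failure probability by $\frac{2+3^{q}}{m\epsilon_0^2}$, which is at most $\delta_0$ precisely when $m\ge(2+3^{q})/(\epsilon_0^2\delta_0)$.

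Second, I would record the entrywise structure of $S^{\top}S$. From $S(s,i)=v(i)\,\mathbf{1}[h(i)=s]$ one gets $(S^{\top}S)(i,i')=v(i)v(i')\,\mathbf{1}[h(i)=h(i')]$, and the diagonal ($i=i'$) contributes exactly $x^{\top}y$ since $v(i)^2=1$. Hence the error is purely off-diagonal, $x^{\top}S^{\top}Sy-x^{\top}y=\sum_{i\ne i'}x_iy_{i'}\,v(i)v(i')\,\mathbf{1}[h(i)=h(i')]$. Unbiasedness (this sum has mean zero) follows at once from independence of the sign and hash families together with $\mathbb{E}[v(i)v(i')]=\prod_k\mathbb{E}[v_k(i_k)v_k(i'_k)]=0$ whenever $i\ne i'$, using that each $v_k$ has zero mean and is at least $2$-wise independent.

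Third, and this is the technical heart, I would square and take the expectation to obtain the quadruple sum over $(i,i',j,j')$ with $i\ne i'$, $j\ne j'$ of $x_iy_{i'}x_jy_{j'}\,\mathbb{E}[v(i)v(i')v(j)v(j')]\,Pr[h(i)=h(i'),\,h(j)=h(j')]$, where independence of the sign and hash families has been used to factor the expectation. Because $v(\cdot)=\prod_k v_k(\cdot)$ and each $v_k$ is $4$-wise independent with values in $\{\pm1\}$, the sign expectation factorizes modewise and equals $1$ only when, in every mode $k$, the four labels $i_k,i'_k,j_k,j'_k$ pair up into two equal pairs; otherwise it vanishes. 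Enumerating the modewise pairing patterns (all-equal, or one of the three ways to split four labels into two pairs) subject to the global constraints $i\ne i'$, $j\ne j'$ produces the combinatorial factor, and combining it with the collision bounds $Pr[h(i)=h(i')]=1/m$ and the joint bound on $Pr[h(i)=h(i'),\,h(j)=h(j')]$—valid because the TensorSketch hash $h$ inherits $3$-wise independence from the $h_k$, as noted after the TensorSketch definition—yields two ``diagonal'' contributions bounded by $\frac{1}{m}\|x\|_2^2\|y\|_2^2$ and $\frac{1}{m}(x^{\top}y)^2\le\frac{1}{m}\|x\|_2^2\|y\|_2^2$, together with the genuinely tensorial cross-terms that aggregate to the $\frac{3^{q}}{m}\|x\|_2^2\|y\|_2^2$ piece.

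I expect the main obstacle to be exactly this last combinatorial bookkeeping: identifying which quadruples survive the modewise sign constraint, tracking how many distinct composite indices each surviving configuration carries (so that the correct $2$-wise or $3$-wise collision probability of $h$ can be invoked, noting that up to four distinct composite indices can arise when different modes use different pairings), and verifying that the aggregated cross-terms collapse to a clean $\frac{3^{q}}{m}\|x\|_2^2\|y\|_2^2$ rather than something larger. Once the per-pair bound $\mathbb{E}[(x^{\top}S^{\top}Sy-x^{\top}y)^2]\le\frac{2+3^{q}}{m}\|x\|_2^2\|y\|_2^2$ is secured, the reduction of the first paragraph and Markov's inequality finish the argument with no further difficulty; since the statement is attributed to \cite{ANW14}, I would also cross-check the precise constant $2+3^{q}$ against their normalization of $S$.
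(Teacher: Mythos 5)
First, a point of comparison: the paper does not prove this lemma at all --- it is imported directly from \cite{ANW14} --- so your proposal has to be judged against the cited source rather than against any argument in the paper. Your overall frame is the right one and matches that source: reduce to column pairs so that it suffices to bound $\mathbb{E}\bigl[(x^{\top}S^{\top}Sy-x^{\top}y)^2\bigr]$ for vectors $x,y$, observe that the error is the mean-zero off-diagonal sum $\sum_{i\ne i'}x_iy_{i'}v(i)v(i')\mathbf{1}[h(i)=h(i')]$, and finish with Markov's inequality; the columnwise reduction, the unbiasedness computation, and the final Markov step are all correct and complete as written.

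The genuine gap is the per-pair bound $\mathbb{E}\bigl[(x^{\top}S^{\top}Sy-x^{\top}y)^2\bigr]\le\frac{2+3^{q}}{m}\|x\|_2^2\|y\|_2^2$, which you assert with a plan rather than prove --- and this bound \emph{is} the lemma; everything else is routine. Moreover, the mechanism you propose for it would not go through as stated. You plan to control the non-diagonal surviving quadruples through a joint collision bound on $Pr[h(i)=h(i'),\,h(j)=h(j')]$ justified by the $3$-wise independence of the composite hash $h$. But surviving configurations can involve four distinct composite indices, about which $3$-wise independence of $h$ says nothing, and for such configurations the joint collision probability can be as large as $1/m$, not $1/m^2$. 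Concretely, for $q=2$ take $i=(\alpha,\beta)$, $i'=(\alpha,\beta')$, $j=(\alpha',\beta)$, $j'=(\alpha',\beta')$ with $\alpha\ne\alpha'$ and $\beta\ne\beta'$: the sign expectation equals $1$ deterministically, and both collision events reduce to the \emph{same} event $h_2(\beta)=h_2(\beta')$, so the joint probability is exactly $1/m$, while the quadruple is disjoint from your two ``diagonal'' families. The correct argument (the one in \cite{ANW14}) instead exploits the additive structure $h(i)=\bigl(\sum_k(h_k(i_k)-1)\bmod m\bigr)+1$: classify each mode by its pairing pattern, note that the per-mode hash differences enter the two collision constraints with equal, opposite, or zero sign, and bound the coefficient sum of each global pattern by $\|x\|_2^2\|y\|_2^2$ via Cauchy--Schwarz combined with a $1/m$ collision bound, the factor $3^{q}$ arising as the count of per-mode pairing patterns and the additive $2$ from the two exact-diagonal patterns. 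Until that modewise case analysis is carried out, the constant $2+3^{q}$ --- and hence the sketch-size threshold in the statement --- is unverified.
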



\begin{lemma}\label{lem3}
Let $S\in \mathbb{R}^{m\times n_{1}n_{2}\cdots n_{q}}$ be the TensorSketch matrix defined as in Lemma \ref{lem2}. Let A and B be matrices with $n_{1}n_{2}\cdots n_{q}$ rows such that $\|A\|_F^2 \leq s$. If $ m= \max{\left\{ 8s^{2}(2+3^{q})/\delta, \ 8s(2+3^{q})/(\epsilon \delta)\right\}},$ the inequalities
$    \left \| A^{\top}S^{\top}SB-A^{\top}B  \right \|_{F}^{2}\le \frac{\epsilon}{4} \left \| B \right \|^{2}_{F}$
and 
$    \left \| A^{\top}S^{\top}SA-A^{\top}A  \right \|_{F}^{2}\le \frac{1}{4} $
hold simultaneously with probability at least $1-\delta$.
\end{lemma}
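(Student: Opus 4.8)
The plan is to derive both estimates from the approximate matrix product property of \Cref{lem2}, applied twice with carefully tuned accuracy and failure parameters, and then to combine the two events with a union bound. Since both inequalities involve the same TensorSketch matrix $S$, they live on a common probability space, so the union bound applies directly once each individual bound is controlled. The key observation that turns the multiplicative guarantee of \Cref{lem2} into the additive bounds stated here is the hypothesis $\|A\|_F^2 \le s$, which lets us absorb the $\|A\|_F^2$ (respectively $\|A\|_F^4$) factor into the choice of $\epsilon_0$.

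For the first inequality I would invoke \Cref{lem2} on the pair $(A,B)$ with accuracy parameter $\epsilon_0^2 = \epsilon/(4s)$ and failure probability $\delta_0 = \delta/2$. This yields, with probability at least $1-\delta/2$,
$$\left\| A^{\top}S^{\top}SB - A^{\top}B \right\|_F^2 \le \frac{\epsilon}{4s}\,\|A\|_F^2\,\|B\|_F^2 \le \frac{\epsilon}{4}\,\|B\|_F^2,$$
where the last step uses $\|A\|_F^2 \le s$. The requirement $m \ge (2+3^q)/(\epsilon_0^2\delta_0)$ from \Cref{lem2} becomes exactly $m \ge 8s(2+3^q)/(\epsilon\delta)$, which is one of the two terms in the prescribed sketch size. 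For the second inequality I would apply \Cref{lem2} to the pair $(A,A)$ with $\epsilon_0^2 = 1/(4s^2)$ and again $\delta_0 = \delta/2$, obtaining with probability at least $1-\delta/2$,
$$\left\| A^{\top}S^{\top}SA - A^{\top}A \right\|_F^2 \le \frac{1}{4s^2}\,\|A\|_F^4 \le \frac{1}{4},$$
using $\|A\|_F^4 = (\|A\|_F^2)^2 \le s^2$. Here the condition on $m$ reduces to $m \ge 8s^2(2+3^q)/\delta$, the other term in the maximum.

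With $m = \max\{8s^2(2+3^q)/\delta,\ 8s(2+3^q)/(\epsilon\delta)\}$, both sketch-size conditions are met simultaneously, so each of the two events holds with probability at least $1-\delta/2$. A union bound then gives that both inequalities fail with probability at most $\delta/2 + \delta/2 = \delta$, hence both hold together with probability at least $1-\delta$, as claimed. I do not expect a genuine obstacle here; the content is essentially bookkeeping of constants, and the only point requiring a little care is verifying that the two chosen pairs $(\epsilon_0,\delta_0)$ translate \emph{exactly} into the two terms of the stated maximum, so that no slack is lost and the final probability is precisely $1-\delta$.
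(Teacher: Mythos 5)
Your proposal is correct and matches the paper's own proof essentially verbatim: the same two applications of \Cref{lem2} with $\epsilon_0^2=\epsilon/(4s)$, $\delta_0=\delta/2$ for the pair $(A,B)$ and $\epsilon_0^2=1/(4s^2)$, $\delta_0=\delta/2$ for $(A,A)$, the same absorption of $\|A\|_F^2\le s$, and the same union bound. The only quibble is phrasing at the end: the union bound shows that \emph{at least one} inequality fails with probability at most $\delta/2+\delta/2=\delta$ (not that ``both fail'' with that probability), but your stated conclusion is the correct one.
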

\begin{proof} According to Lemma \ref{lem2}, for $m \geq 8s(2+3^q)/(\epsilon \delta)$, the inequality
\begin{equation}\label{eqn_AB}
    \left \| A^{\top}S^{\top}SB-A^{\top}B  \right \|_{F}^{2}\le \frac{\epsilon}{4s} \|A \|_F^2 \left \| B \right \|^{2}_{F} \le \frac{\epsilon}{4} \left \| B \right \|^{2}_{F}
\end{equation}
holds with probability at least $1 - \delta/2$ by setting $\epsilon_0= \sqrt{\epsilon/ 4 s} $ and $\delta_0 = \delta/2$. Again, for $m \geq 8 s^2 (2+3^q)/\delta$, the inequality
\begin{equation}\label{eqn_AA}
    \left \| A^{\top}S^{\top}SA-A^{\top}A  \right \|_{F}^{2}\le \frac{1}{4s^2} \|A \|_F^4  \le \frac{1}{4} 
\end{equation}
holds with probability at least $1 - \delta/2$ by setting $\epsilon_0= \sqrt{1/4s^2} $ and $\delta_0 = \delta/2$. If $m= \max{\left\{ 8s^{2}(2+3^{q})/\delta, \ 8s(2+3^{q})/(\epsilon \delta)\right\}}$, we have
\begin{subequations}
\begin{align*}
      Pr\left[ \text{(\ref{eqn_AB}) and  (\ref{eqn_AA}) hold} \right]&= 1- Pr\left[ \text{(\ref{eqn_AB}) does not hold or (\ref{eqn_AA}) does not hold} \right] \\ 
                                    &\geq 1- Pr\left[\text{(\ref{eqn_AB}) does not hold}\right] - Pr\left[\text{(\ref{eqn_AA}) does not hold}\right]\\
                                    &\geq 1- \frac{\delta}{2} - \frac{\delta}{2} = 1 - \delta. 
\end{align*}
\end{subequations}
\end{proof}

\begin{theorem}[TensorSketch for Least Squares]\label{thm4} Given a full-rank matrix $P\in \mathbb{R}^{n_{1}n_{2}\cdots n_{q}\times s}$ with $n_{1}n_{2}\cdots n_{q}>s$, and $B\in \mathbb{R}^{n_{1}n_{2}\cdots n_{q}\times n}$, let $S\in \mathbb{R}^{m\times n_{1}n_{2}\cdots n_{q}}$ be the TensorSketch matrix defined as in Lemma \ref{lem2}. Denote $X_{opt} = \arg \min_{X}\left \| PX-B \right \| _{F}$, $\tilde{X}_{opt}  = \arg \min_{X}\left \| SPX-SB \right \| _{F}$ and $B^{\perp} = PX_{opt}-B$.
If $$m= \max{\left\{ 8s^{2}(2+3^{q})/\delta, \ 8s(2+3^{q})/(\epsilon \delta)\right\}},$$ the following approximation holds with probability at least $1-\delta$,
$$\left \| P\tilde{X}_{opt}-B \right \|_{F}^{2}\le \left(1+\epsilon \right) \left \| B^{\perp} \right \|_{F}^{2}. 
$$
\end{theorem}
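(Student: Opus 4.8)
The plan is to follow the classical structural--perturbation argument for sketched least squares: reduce to the case of a coefficient matrix with orthonormal columns, split the residual by a Pythagorean identity, and then control the perturbation of the minimizer by two applications of Lemma \ref{lem3}. First I would replace $P$ by an orthonormal basis of its column space. Since $P$ has full column rank, write a thin QR factorization $P = QR$ with $Q \in \mathbb{R}^{n_1 \cdots n_q \times s}$ having orthonormal columns and $R \in \mathbb{R}^{s \times s}$ invertible. Because $X \mapsto RX$ is a bijection, minimizing $\|PX - B\|_F$ (respectively $\|SPX - SB\|_F$) is equivalent to minimizing $\|QY - B\|_F$ (respectively $\|SQY - SB\|_F$), and one checks that $P\tilde{X}_{opt} = Q\tilde{Y}_{opt}$ and that the optimal residual $B^\perp = PX_{opt} - B = -(I - PP^{+})B$ depends only on the column space of $P$; hence both sides of the target inequality are invariant under this reparametrization. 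It therefore suffices to prove the bound assuming from the start that $P$ has orthonormal columns, so that $\|P\|_F^2 = s$, $X_{opt} = P^\top B$, and $B^\perp = PX_{opt} - B$ satisfies $P^\top B^\perp = 0$.

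Next I would record the residual decomposition. Since $P(\tilde X_{opt} - X_{opt})$ lies in the column space of $P$ while $B^\perp$ is orthogonal to it, and $P^\top P = I_s$, the Pythagorean identity gives
\begin{equation*}
\|P\tilde X_{opt} - B\|_F^2 = \|P(\tilde X_{opt} - X_{opt})\|_F^2 + \|B^\perp\|_F^2 = \|\tilde X_{opt} - X_{opt}\|_F^2 + \|B^\perp\|_F^2 .
\end{equation*}
Thus the whole task reduces to showing $\|\tilde X_{opt} - X_{opt}\|_F^2 \le \epsilon \|B^\perp\|_F^2$. To this end I would use the sketched normal equations $P^\top S^\top S P\, \tilde X_{opt} = P^\top S^\top S B$. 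Substituting $B = PX_{opt} - B^\perp$ and rearranging yields $P^\top S^\top S P (\tilde X_{opt} - X_{opt}) = -P^\top S^\top S B^\perp$, so that $\tilde X_{opt} - X_{opt} = -(P^\top S^\top S P)^{-1} P^\top S^\top S B^\perp$, provided the sketched Gram matrix $P^\top S^\top S P$ is invertible.

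The two bounds controlling the right-hand side come from Lemma \ref{lem3} applied with $A = P$, which is admissible precisely because $\|P\|_F^2 = s$. The second inequality of Lemma \ref{lem3} gives $\|P^\top S^\top S P - I_s\|_F \le 1/2$; since the spectral norm is dominated by the Frobenius norm, the eigenvalues of the symmetric matrix $P^\top S^\top S P$ lie in $[1/2, 3/2]$, which simultaneously guarantees invertibility and yields $\|(P^\top S^\top S P)^{-1}\|_2 \le 2$. The first inequality, applied with $A = P$ and $B^\perp$ in the role of the second factor and using $P^\top B^\perp = 0$, gives $\|P^\top S^\top S B^\perp\|_F = \|P^\top S^\top S B^\perp - P^\top B^\perp\|_F \le (\sqrt{\epsilon}/2)\|B^\perp\|_F$. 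Combining, $\|\tilde X_{opt} - X_{opt}\|_F \le 2 \cdot (\sqrt{\epsilon}/2)\|B^\perp\|_F = \sqrt{\epsilon}\,\|B^\perp\|_F$, and substituting into the Pythagorean identity gives the claimed $(1+\epsilon)$ bound. Because both inequalities of Lemma \ref{lem3} hold simultaneously with probability at least $1-\delta$ for the prescribed $m$, the conclusion inherits that probability.

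The step I expect to be the main obstacle is the passage from the Frobenius bound on $P^\top S^\top S P - I_s$ to operator-norm control of its inverse: this is what upgrades an approximate-matrix-product guarantee into genuine invertibility of the sketched Gram matrix and supplies the factor $2$ in the final estimate, and it is the reason the fixed constant $1/4$ (rather than a free parameter) is built into the second inequality of Lemma \ref{lem3}. Secondary care is needed to confirm that the reduction to orthonormal columns leaves both $B^\perp$ and the sketched fit $P\tilde X_{opt}$ unchanged, and to observe that a single invocation of Lemma \ref{lem3} with $A = P$ delivers both required estimates, so that only one union bound of total failure probability $\delta$ is incurred.
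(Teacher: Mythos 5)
Your proposal is correct and follows essentially the same route as the paper's proof: reduction to an orthonormal-column coefficient matrix via thin QR, the Pythagorean split of the residual, the sketched normal equations, and two simultaneous applications of Lemma \ref{lem3} (cross-term bound plus Gram-matrix perturbation giving invertibility with $\|(P^{\top}S^{\top}SP)^{-1}\|_{2}\le 2$, which the paper phrases equivalently as $\sigma_{\min}^{2}(SQ_{P})\ge 1/2$). The only cosmetic difference is that you absorb the triangular factor $R_{P}$ into a change of variables at the outset, whereas the paper carries it explicitly through the estimate.
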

\begin{proof} Define the reduced QR decomposition of $P$, i.e., $P=Q_{P}R_{P}$ where $Q_{P} \in \mathbb{R}^{n_1 n_2 \cdots n_q \times s}$ satisfies $Q_P^\top Q_P = I_s$ and $R_{P} \in \mathbb{R}^{s \times s}$ is upper triangular. note that $R_P$ is nonsingular since $P$ is full-rank. The sketched least squares problem is rewritten as
\begin{subequations}
\begin{align*}
       \min_{X}\left \| SPX-SB \right \|_{F} &= \min_{X}\left \| SPX-S(PX_{opt}-B^{\perp }) \right \|_{F}\\
       &=\min _{X}\left \| SQ_{P}R_{P}(X-X_{opt})+SB^{\perp} \right \|_{F}.
\end{align*}
\end{subequations}
From the optimality condition, one could obtain that
\begin{equation}\label{eqn_opt}
(SQ_{P})^{\top}SQ_{P}R_{P}({X}_{opt}-\tilde{X}_{opt})=(SQ_{P})^{\top}SB^{\perp }.
\end{equation}
Similarly, we have $Q_{P}^{\top}(PX_{opt}-B) = Q_{P}^{\top}B^{\perp }=0$ since $X_{opt} = \arg \min_{X}\left \| PX-B \right \| _{F}$.

According to Lemma \ref{lem3}, since $\| Q_P\|_F^2 = s$, the inequalities
\begin{equation}\label{eqn_QB}
    \left \| Q_{P}^{\top}S^{\top}SB^{\perp} \right \|_{F}^{2}=\left \| Q_{P}^{\top}S^{\top}SB^{\perp}-Q_{P}^{\top}B^{\perp } \right \|_{F}^{2}\le \frac{\epsilon}{4} \left \| B^{\perp } \right \|^{2}_{F}
\end{equation}
and
\begin{equation}\label{eqn_QQ}
    \left \| Q_{P}^{\top}S^{\top}S Q_P - I_s \right \|_{F}^{2}=\left \| Q_{P}^{\top}S^{\top}S Q_P -Q_{P}^{\top}Q_P \right \|_{F}^{2}\le \frac{1}{4}
\end{equation}
hold simultaneously with probability at least $1-\delta$. From (\ref{eqn_QQ}), one could derive that 
\begin{equation}\label{eqn_sig}
    \sigma _{min}^{2}(SQ_{P})\ge \frac{1}{2} ,
\end{equation}
where $\sigma_{min}(SQ_{P})$ is the minmal singular value of $SQ_{P}$. Based on (\ref{eqn_opt}), (\ref{eqn_QB}) and (\ref{eqn_sig}), we obtain
\begin{subequations}
\begin{align*}
 \left \| P\tilde{X}_{opt}-PX_{opt}  \right \|_{F}^{2}&= \left \| R_{P}\tilde{X}_{opt}-R_{P}X_{opt}  \right \|_{F}^{2}\\ 
                                    &\le 4\left \|(SQ_{P})^{\top}SQ_{P}R_{P}({X}_{opt}-\tilde{X}_{opt}) \right \|_{F}^{2}\\
                                    &=4\left \| Q_{P}^{\top}S^{\top}SB^{\perp } \right \|_{F}^{2}\le \epsilon \left \| B^{\perp} \right \|_{F}^{2}.
\end{align*}
\end{subequations}
Thus, with probability at least $1 -\delta$, we have
\begin{subequations}
\begin{align*}
      \left \| P\tilde{X}_{opt}-B  \right \|_{F}^{2}&= \left \| P\tilde{X}_{opt}-PX_{opt}+PX_{opt}-B  \right \|_{F}^{2}\\ 
                                    &=\left \| P\tilde{X}_{opt}-PX_{opt} \right \|_{F}^{2}+\left \| B^{\perp} \right \|_{F}^{2}\\
                                    &\le \left(1+ \epsilon \right) \left \| B^{\perp} \right \|_{F}^{2}.  
\end{align*}
\end{subequations}
\end{proof}

\begin{theorem}[TensorSketch for Least Squares with Proximal]\label{thm5} Given a full-rank matrix $P\in \mathbb{R}^{n_{1}n_{2}\cdots n_{q}\times s}$ with $n_{1}n_{2}\cdots n_{q}>s$, $B\in \mathbb{R}^{n_{1}n_{2}\cdots n_{q}\times n}$, $C \in \mathbb{R}^{s \times n}$ and $\sigma >0$, let $S\in \mathbb{R}^{m\times n_{1}n_{2}\cdots n_{q}}$ be the TensorSketch matrix defined as in Lemma \ref{lem2}. Denote $X_{opt}=\arg \min_{X}\frac{1}{2}\left \| PX-B\right \|_{F}^{2}+\frac{\sigma}{2}\left\|X-C\right\|_{F}^{2}$ and $\tilde{X}_{opt}=\arg \min_{X}\frac{1}{2}\left \| SPX-SB\right \|_{F}^{2}+\frac{\sigma}{2}\left\|X-C\right\|_{F}^{2}$. If $m= \max{\left\{ 8s^{2}(2+3^{q})/\delta, \ 8s(2+3^{q})/(\epsilon \delta)\right\}}$, the approximation
    $$\frac{1}{2}\left \| P\tilde{X}_{opt}-B\right \|_{F}^{2}+\frac{\sigma}{2}\left\|\tilde{X}_{opt}-C\right\|_{F}^{2}\leq \left(1+\epsilon\right)\cdot OPT $$ 
holds with probability at least $1- \delta$, where $OPT=\frac{1}{2}\left \| P{X}_{opt}-B\right \|_{F}^{2}+\frac{\sigma}{2}\left\|{X}_{opt}-C\right\|_{F}^{2}$.
\end{theorem}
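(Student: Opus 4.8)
The plan is to absorb the proximal regularization into a single least-squares problem by stacking matrices, and then adapt the argument of Theorem~\ref{thm4} to this augmented system. Specifically, I would set $\hat{P} = \begin{bmatrix} P \\ \sqrt{\sigma}\, I_s \end{bmatrix}$ and $\hat{B} = \begin{bmatrix} B \\ \sqrt{\sigma}\, C \end{bmatrix}$, so that the unsketched proximal objective equals $\frac{1}{2}\|\hat{P}X - \hat{B}\|_F^2$ and hence $X_{opt} = \arg\min_X \frac{1}{2}\|\hat{P}X - \hat{B}\|_F^2$. Introducing the block-diagonal matrix $\hat{S} = \begin{bmatrix} S & 0 \\ 0 & I_s \end{bmatrix}$, the sketched proximal objective becomes $\frac{1}{2}\|\hat{S}\hat{P}X - \hat{S}\hat{B}\|_F^2$, so that $\tilde{X}_{opt} = \arg\min_X \frac{1}{2}\|\hat{S}\hat{P}X - \hat{S}\hat{B}\|_F^2$. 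Note that $\hat{P}$ has full column rank $s$ regardless of $P$, so its reduced QR factorization $\hat{P} = \hat{Q}\hat{R}$ has nonsingular $\hat{R}$.

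I cannot simply invoke Theorem~\ref{thm4}, because $\hat{S}$ is not a TensorSketch matrix. The crux of the argument is therefore to re-establish the two approximate-matrix-product estimates used in the proof of Theorem~\ref{thm4}, namely the analogs of (\ref{eqn_QB}) and (\ref{eqn_QQ}), for the augmented sketch $\hat{S}$. Writing $\hat{Q} = \begin{bmatrix} Q_1 \\ Q_2 \end{bmatrix}$ with $Q_1$ having $n_1 n_2 \cdots n_q$ rows, and $\hat{B}^\perp = \hat{P}X_{opt} - \hat{B} = \begin{bmatrix} B_1^\perp \\ B_2^\perp \end{bmatrix}$, the key observation is that the identity block of $\hat{S}$ acts trivially. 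A short computation then gives $\hat{Q}^\top \hat{S}^\top \hat{S}\hat{Q} - I_s = Q_1^\top S^\top S Q_1 - Q_1^\top Q_1$ and, using the exact optimality relation $\hat{Q}^\top \hat{B}^\perp = 0$, also $\hat{Q}^\top \hat{S}^\top \hat{S}\hat{B}^\perp = Q_1^\top S^\top S B_1^\perp - Q_1^\top B_1^\perp$. Thus all sketch-induced error lives in the single TensorSketch block $S$ acting on the submatrices $Q_1$ and $B_1^\perp$.

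Since $\|Q_1\|_F^2 \leq \|\hat{Q}\|_F^2 = s$ and $\|B_1^\perp\|_F \leq \|\hat{B}^\perp\|_F$, I would apply Lemma~\ref{lem3} with $A = Q_1$ and $B = B_1^\perp$. For the stated sketch size $m$, this yields $\|\hat{Q}^\top \hat{S}^\top \hat{S}\hat{Q} - I_s\|_F^2 \leq \frac{1}{4}$ and $\|\hat{Q}^\top \hat{S}^\top \hat{S}\hat{B}^\perp\|_F^2 \leq \frac{\epsilon}{4}\|\hat{B}^\perp\|_F^2$ simultaneously with probability at least $1-\delta$. From here the remainder of the proof of Theorem~\ref{thm4} carries over essentially verbatim with $(P, B, Q_P)$ replaced by $(\hat{P}, \hat{B}, \hat{Q})$: the first bound gives $\sigma_{min}^2(\hat{S}\hat{Q}) \geq \frac{1}{2}$, the sketched optimality condition combined with the second bound gives $\|\hat{P}\tilde{X}_{opt} - \hat{P}X_{opt}\|_F^2 \leq \epsilon\|\hat{B}^\perp\|_F^2$, and the Pythagorean identity (which applies since $\hat{Q}^\top \hat{B}^\perp = 0$) yields $\|\hat{P}\tilde{X}_{opt} - \hat{B}\|_F^2 \leq (1+\epsilon)\|\hat{B}^\perp\|_F^2$.

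Finally I would translate back from the augmented variables. By construction $\|\hat{P}\tilde{X}_{opt} - \hat{B}\|_F^2 = \|P\tilde{X}_{opt} - B\|_F^2 + \sigma\|\tilde{X}_{opt} - C\|_F^2$ and $\|\hat{B}^\perp\|_F^2 = \|PX_{opt} - B\|_F^2 + \sigma\|X_{opt} - C\|_F^2 = 2\cdot OPT$, so dividing through by $2$ delivers the claimed bound. The main obstacle is precisely the step flagged above: verifying that the non-TensorSketch matrix $\hat{S}$ still satisfies the required oblivious-subspace-embedding estimates. This is resolved by the structural observation that the identity block leaves the proximal part untouched, so the error reduces to the genuine TensorSketch $S$ acting on blocks whose Frobenius norms are dominated by those of the augmented matrices, allowing Lemma~\ref{lem3} to apply with the same $s$ and the same $m$.
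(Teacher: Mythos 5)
Your proposal is correct and follows essentially the same route as the paper's own proof: stacking $P$ with $\sqrt{\sigma}I_s$ and $B$ with $\sqrt{\sigma}C$, sketching with the block-diagonal matrix $\begin{bmatrix} S & 0 \\ 0 & I_s \end{bmatrix}$, observing that the two key estimates reduce to the TensorSketch block acting on the top blocks $Q_1$ and $B_1^{\perp}$ so that Lemma~\ref{lem3} applies with the same $s$ and $m$, and then rerunning the proof of Theorem~\ref{thm4}. The only difference is cosmetic: the paper changes variables to $Y$ via the orthonormal basis of the augmented matrix, while you keep the original variable $X$ and work through the QR factorization, which is equivalent.
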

\begin{proof} Since $P$ is full-rank, the matrix $\begin{bmatrix} P\\ \sqrt{\sigma}I_s  \end{bmatrix} \in \mathbb{R}^{(n_{1}n_{2}\cdots n_{q}+s)\times s}$ is also full-rank. Let $\tilde{P} = \begin{bmatrix} Q_1\\ Q_2 \end{bmatrix} \in \mathbb{R}^{(n_{1}n_{2}\cdots n_{q}+s)\times s}$ be the othonormal basis for the column space of $\begin{bmatrix} P\\ \sqrt{\sigma}I_s  \end{bmatrix}$, where $Q_1 \in \mathbb{R}^{n_1 n_2 \cdots n_q \times s}$ and $Q_2 \in \mathbb{R}^{s \times s}$. It follows that for any $X\in \mathbb{R}^{s \times n}$, there is a unique $Y\in \mathbb{R}^{s \times n}$ such that $\tilde{P}Y=\begin{bmatrix} P\\ \sqrt{\sigma}I_s \end{bmatrix} X$, and vice versa. Let $\tilde{B} = \begin{bmatrix} B\\ \sqrt{\sigma}C \end{bmatrix}$. Since
$\frac{1}{2}\left \| \tilde{P}Y- \tilde{B} \right \|_{F}^{2}=\frac{1}{2} \left \| \begin{bmatrix} P\\ \sqrt{\sigma}I_s \end{bmatrix} X-\begin{bmatrix} B\\ \sqrt{\sigma}C \end{bmatrix}\right \|_{F}^{2}= \frac{1}{2}\left \| PX-B\right \|_{F}^{2}+\frac{\sigma}{2} \left\|X-C\right\|_{F}^{2},$
the two optimization problems are equivalent, i.e.,
$$\min_{X} \frac{1}{2}\left \| PX-B\right \|_{F}^{2}+\frac{\sigma}{2}\left\|X-C\right\|_{F}^{2} \Longleftrightarrow  \min_{Y} \frac{1}{2}\left \| \tilde{P}Y-\tilde{B}   \right \|_{F}^{2}.$$
Let $Y_{opt}=\arg\min_{Y} \frac{1}{2}\left \| \tilde{P}Y-\tilde{B}   \right \|_{F}^{2}$, so that $\tilde{P}Y_{opt}=\begin{bmatrix} P  \\ \sqrt{\sigma} I_s \end{bmatrix} X_{opt}$. Next, we define $\tilde{S}$ to be 
  $\begin{bmatrix}
    S&0 \\
    0&I_{s}
   \end{bmatrix}$.
Let $\tilde{Y}_{opt}=\arg\min_{Y} \frac{1}{2}\left \| \tilde{S}\tilde{P}Y- \tilde{S} \tilde{B}  \right \|_{F}^{2}$. Similarly, we have $\tilde{P} \tilde{Y}_{opt}=\begin{bmatrix} P  \\ \sqrt{\sigma} I_s \end{bmatrix} \tilde{X}_{opt}$ since
$\frac{1}{2}\left \| \tilde{S}\tilde{P}Y- \tilde{S}\tilde{B} \right \|_{F}^{2} = \frac{1}{2}\left \| SPX-SB\right \|_{F}^{2}+\frac{\sigma}{2} \left\|X-C\right\|_{F}^{2}.$

Let $\tilde{B}^{\perp}=\tilde{P}Y_{opt}-\tilde{B}$ and $B^{\perp}=Q_1 Y_{opt} -B$. According to Lemma \ref{lem3}, the inequalities
$
    \left \| Q_{1}^{\top}S^{\top}SB^{\perp}-Q_{1}^{\top}B^{\perp } \right \|_{F}^{2}\le \frac{\epsilon}{4} \left \| B^{\perp } \right \|^{2}_{F}
$
and
$
    \left \| Q_{1}^{\top}S^{\top}S Q_1 -Q_{1}^{\top}Q_1 \right \|_{F}^{2}\le \frac{1}{4}
$
hold simultaneously with probability at least $1-\delta$ since $\left\|Q_1 \right\|_F^2 \leq \left\| \tilde{P} \right\|_F^2 \leq s$. It follows that
$$ \left \|\tilde{P}^{\top}\tilde{S}^{\top}\tilde{S}\tilde{B}^{\perp}-\tilde{P}^{\top}\tilde{B}^{\perp} \right \|_{F}^2 =  \left \| Q_{1}^{\top}S^{\top}SB^{\perp}-Q_{1}^{\top}B^{\perp } \right \|_{F}^{2}\le \frac{\epsilon}{4} \left \| B^{\perp } \right \|^{2}_{F} \leq \frac{\epsilon}{4} \left \| \tilde{B}^{\perp } \right \|^{2}_{F} $$
and
$$ \left \|\tilde{P}^{\top}\tilde{S}^{\top}\tilde{S}\tilde{P} - I_s \right\|_{F}^2 = \left\|\tilde{P}^{\top}\tilde{S}^{\top}\tilde{S}\tilde{P} -\tilde{P}^{\top}\tilde{P} \right \|_{F}^2 = \left \| Q_{1}^{\top}S^{\top}S Q_1 -Q_{1}^{\top}Q_1 \right \|_{F}^{2}\le \frac{1}{4} . $$
According to the proof of Theorem \ref{thm4}, the inequality 
$$\left \| \tilde{P}\tilde{Y}_{opt}-\tilde{B}\right \|_{F}^{2}\leq \left(1+\epsilon \right)\left \| \tilde{P}Y_{opt}-\tilde{B}\right \|_{F}^{2}$$ 
holds with probability at least $1- \delta$. Thus,
    $$\frac{1}{2}\left \| P\tilde{X}_{opt}-B\right \|_{F}^{2}+\frac{\sigma}{2}\left\|\tilde{X}_{opt}-C\right\|_{F}^{2}\leq \left(1+\epsilon \right)\cdot OPT $$
holds with probability at least $1- \delta$. 
\end{proof}

\begin{corollary}\label{cor3}
If $m= \max{\left\{ 8s^{2}(2+3^{q})/\delta, \ 8s(2+3^{q})/(\epsilon \delta)\right\}}$, there is at least $1-\delta$ probability that ${\cal G}_k^{(t+1)}$ computed by (\ref{ite}) is a solution with a relative error of $\epsilon$ from the optimal value of (\ref{lrtt_sub_prox}).
\end{corollary}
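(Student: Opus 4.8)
The plan is to recognize that Corollary \ref{cor3} is a direct specialization of Theorem \ref{thm5} to the subproblem (\ref{lrtt_sub_prox}). First I would set up the correspondence between the generic proximal least squares problem of Theorem \ref{thm5} and the TT-ALS subproblem: take $P = H_k = G_{>k}^\top \otimes G_{<k}$, the variable $X = {\cal G}_{k(2)}^\top$, the data $B = {\cal A}_{(k)}^\top$, and the proximal center $C = ({\cal G}_{k(2)}^{(t)})^\top$. With this identification the number of Kronecker factors is $q = d-1$ (the sketch $S$ acts on the $d-1$ modes $i \in [d]\setminus\{k\}$), and the number of columns of $P$ is $s = r_{k-1}r_k$. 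Under this dictionary, (\ref{lrtt_sub_prox}) and (\ref{lrtt_sub4}) become exactly the unsketched and sketched proximal problems of Theorem \ref{thm5}.

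Next I would verify that the hypotheses of Theorem \ref{thm5} are met. The sketch $S$ is the order-$(d-1)$ TensorSketch matrix generated by the CountSketch matrices $\{S_i\}_{i\ne k}$, so it is of the form required by Lemma \ref{lem2} with $q = d-1$. The remaining hypothesis is that $P = H_k$ has full column rank; since $H_k = G_{>k}^\top \otimes G_{<k}$ and the Kronecker product of two full-rank matrices is full-rank, it suffices that $G_{<k}$ and $G_{>k}^\top$ each have full column rank $r_{k-1}$ and $r_k$. This is guaranteed by the left/right orthogonalization step in Algorithm \ref{alg_TT-TS}, which keeps the contracted factors orthonormal and hence of full rank.

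I would then confirm that the iterate ${\cal G}_k^{(t+1)}$ produced by (\ref{ite}) is precisely the minimizer $\tilde{X}_{opt}$ of the sketched problem (transposed). Writing the normal equations for (\ref{lrtt_sub4}) gives $(H_k^\top S^\top S H_k + \sigma I){\cal G}_{k(2)}^\top = H_k^\top S^\top S {\cal A}_{(k)}^\top + \sigma ({\cal G}_{k(2)}^{(t)})^\top$; transposing this and using the symmetry of $H_k^\top S^\top S H_k + \sigma I$ recovers (\ref{ite}) exactly, so $\tilde{X}_{opt} = ({\cal G}_{k(2)}^{(t+1)})^\top$. Consequently the objective value attained by ${\cal G}_k^{(t+1)}$ equals the sketched-optimal value $\frac{1}{2}\|P\tilde{X}_{opt}-B\|_F^2 + \frac{\sigma}{2}\|\tilde{X}_{opt}-C\|_F^2$.

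Finally, invoking Theorem \ref{thm5} with $s = r_{k-1}r_k$ and $q = d-1$, the choice $m = \max\{8s^2(2+3^q)/\delta,\ 8s(2+3^q)/(\epsilon\delta)\}$ yields the $(1+\epsilon)$ relative-error bound on the objective value with probability at least $1-\delta$, which is exactly the claim. The only point requiring care is the full-rank hypothesis on $H_k$; everything else is a transcription of Theorem \ref{thm5}. I expect this full-rank verification --- tying it to the orthogonalization performed in the algorithm --- to be the main (and only mild) obstacle.
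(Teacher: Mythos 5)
Your proposal is correct and is essentially the paper's own argument: the paper states Corollary~\ref{cor3} without proof as an immediate specialization of Theorem~\ref{thm5}, and your dictionary $P = H_k = G_{>k}^\top \otimes G_{<k}$, $X = \mathcal{G}_{k(2)}^\top$, $B = \mathcal{A}_{(k)}^\top$, $C = (\mathcal{G}_{k(2)}^{(t)})^\top$, $s = r_{k-1}r_k$, $q = d-1$, together with the normal-equations check that (\ref{ite}) produces exactly the sketched minimizer $\tilde{X}_{opt}$, is precisely the intended reading. Your verification of the full-column-rank hypothesis on $H_k$ via the orthogonalization steps in Algorithm~\ref{alg_TT-TS} is a detail the paper glosses over, and including it only strengthens the argument.
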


\section{Numerical Experiment}
\label{sec_5}
    To test the effectiveness of our proposed algorithm (denoted by TT-TS), we compare it with other two algorithms. The first one is the deterministic algorithm TT-ALS \cite{HRS12}, which serves as the baseline for low-rank tensor train decomposition. The second one is the randomized algorithm called TT-Random \cite{WCC21} where the sketching matrix in (\ref{lrtt_sub4}) is chosen such that the rows of $H_k$ are chosen randomly. In the three algorithms, TT-cores are updated from left to right, and we developed our own implementation tailored to the specific problem. All three algorithms take the same TT-ranks as input (the boundary ranks are set to 1). To ensure fairness, we used third-order zero tensors as the initial core tensors for the experiments, and the stopping criteria is either the maximum number of iteration is achieved or the algorithm reaches the tolerance error. The accuracy evaluation for the algorithms is the maximum relative error of TT-cores between two subsequent iterations, calculated using the following formula: 
$$\max_{k=1,2,\ldots, d} \left \{ \frac{\left \| \mathcal{G}_{k}^{(t+1)}-\mathcal{G}_{k}^{(t)} \right \|_{F} }{\left \| \mathcal{G}_{k}^{(t+1)}  \right \|_{F} }  \right \}.$$
    All experiments were conducted using Matlab R2016b on a computer with an AMD E2 7TH-GEN @2.20GHz CPU and 8 GB of RAM. We utilized the MATLAB Tensor Toolbox \cite{BK23} to perform the experiments.

     \subsection{Experimental Results for Synthetic Data}
        In the first synthetic experiment, we randomly generate a sixth-order tensor $\mathcal{A}_{1} \in \mathbb{R}^{10 \times 10 \times \cdots \times 10}$ with TT-format, where the entries of each core are drawn independently from a standard normal distribution. For simplicity, the TT-ranks are equal, i.e., $r_1=r_2=\cdots=r_5$. The true rank of the generated tensor is denoted by $r_{true}$ while the target rank used in the algorithms is denoted by $r$. 
        In addition, the generated tensor has been added by Gaussian noise with standard deviations of 0.1 and 0.01, respectively. The numerical results are reported in Figures \ref{fig_2} and \ref{fig_3}.
\begin{figure}[htbp]
	\centering
	\includegraphics[width=1.0\linewidth]{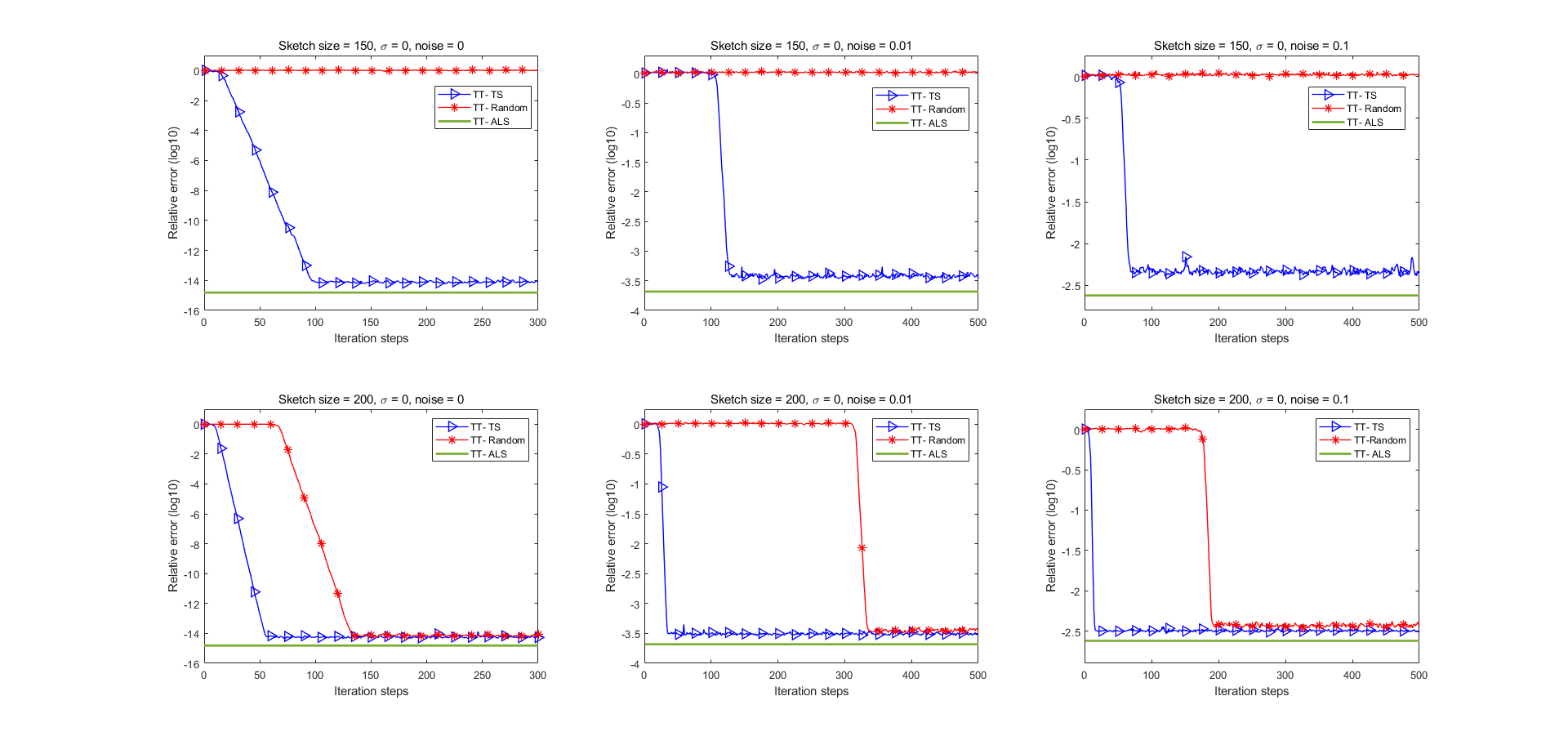}
	\caption{Iteration vs. relative error for tensor $\mathcal{A}_{1}$ with target rank $r=r_{true}= 5$.}
	\label{fig_2}
\end{figure}

\begin{figure}[htbp]
	\centering
	\includegraphics[width=1.0\linewidth]{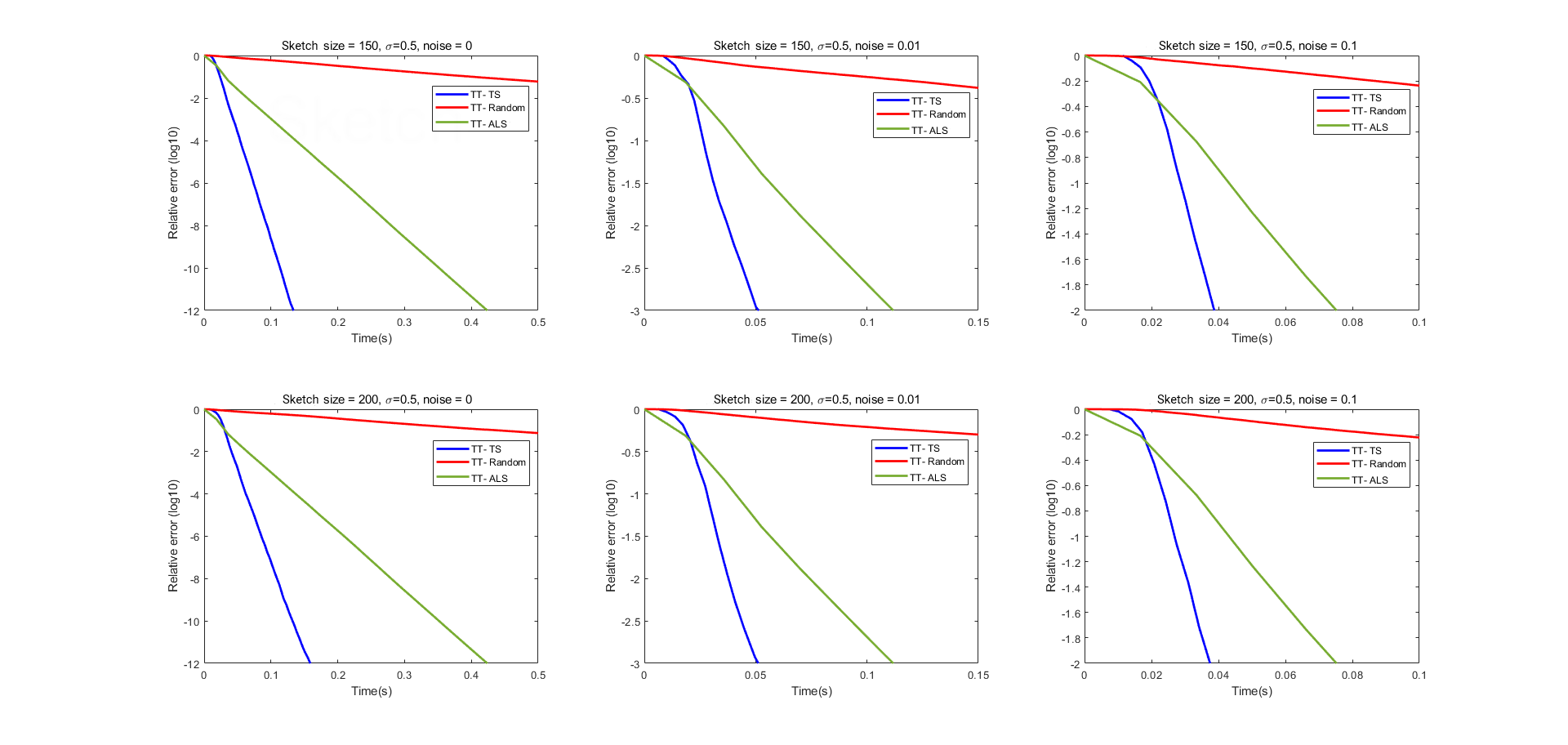}
	\caption{Time vs. relative error for tensor $\mathcal{A}_{1}$ with target rank $r = r_{true}= 5$.}
	\label{fig_3}
\end{figure}
         Figure \ref{fig_2} shows the relationship between the number of iterations and the relative error for TT-TS and TT-Random at the same sketch size, and all the data in the figures are the mean of 10 runs. As we can see, when the sketch size is low (sketch size = 150), our method (TT-TS) requires on average only 100 iterations to achieve an accuracy close to that of TT-ALS, whereas TT-Random fails to converge or shows little improvement in terms of error reduction. When the sketch size is increased to 200, TT-Random significantly reduces the error after about 85 iterations until it reaches an accuracy similar to that of our method, but it takes on average almost 100 more iterations than our method. Furthermore, in the presence of Gaussian noise, TT-Random requires more iterations than our method to achieve the same accuracy. In contrast, our method requires only a small number of samples to achieve an accuracy close to that of TT-ALS. In addition, to avoid the singularity of subproblems, we add a regularization term to the subproblems. Figure \ref{fig_3} shows the relationship between time and relative error for the three algorithms with a proximal term parameter of 0.5. Combining the numerical results in Figures \ref{fig_2} and \ref{fig_3}, we can see that TT-TS requires the least amount of time to compute the TT decomposition of a large-scale tensor. This is because the complexity of our method is much lower than that of TT-ALS, while its accuracy is much higher than that of TT-Random.

     \subsection{Experimental Results for One-dimensional Functions}
        In the second experiment, we use TT-Random and TT-TS to approximate two one-dimensional functions. The first function is $y=sinc(x)$, which is widely used in the fields of signal processing and image processing. The second function is $y=sin(\frac{4}{x}) cos (x^2)$, which is chosen from the highly oscillatory functions considered in \cite{MB21}. We evaluated these two functions at $10^{6}$ points within the intervals $\left[-5,5\right]$ and $\left[0,1\right]$, respectively. Then, we used the command \texttt{reshape} in MATLAB to transform the function values within the intervals into sixth-order tensors, denoted as $\mathcal{A}_{2}, \mathcal{A}_{3} \in \mathbb{R}^{10 \times 10 \times \cdots \times 10}$, respectively. During the approximation process, we set the target rank $r=5$ and sketch size $m=45$ for $\mathcal{A}_{2}$, and set the target rank $r=20$ and sketch size $m=1000$ for $\mathcal{A}_{3}$. We considered the impact of proximal term in the experiments with different values of $\sigma$, i.e., $\sigma =0$ and $\sigma=0.5$. The numerical results are shown in Figures \ref{fig_4} and \ref{fig_5}, which demonstrate the accuracy of the approximation using TT-TS and TT-Random after 100 iterations. The accuracy is measured by computing the relative error (denoted by ``err" in the figures) between the original tensor and the approximate tensor. Additionally, Figure \ref{fig_6} shows the relationship between the time and relative error for the three algorithms with $\sigma =0.5$.

\begin{figure}[htbp]
	\centering
	\includegraphics[width=1.0\linewidth]{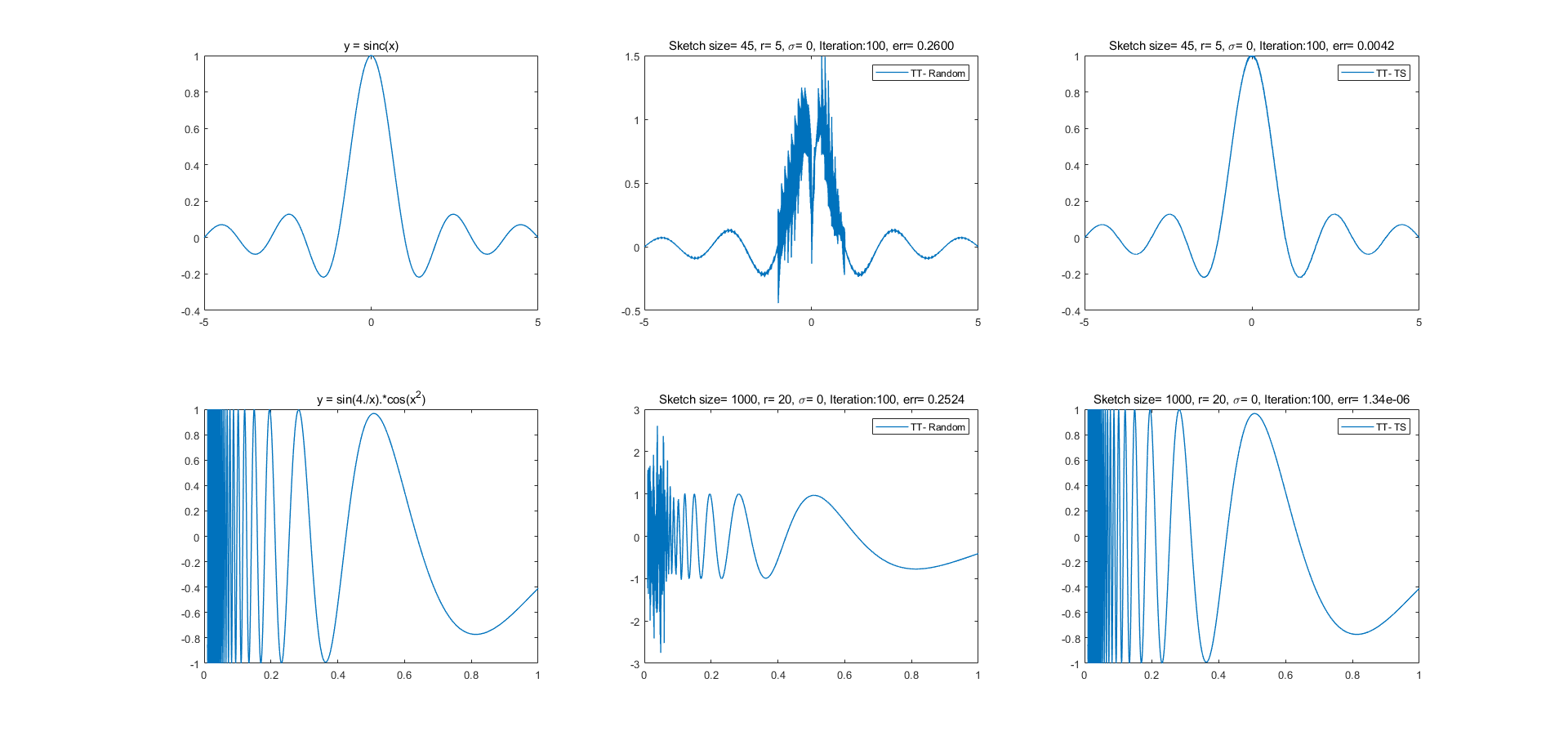}
	\caption{One-dimensional function approximation for $\sigma = 0$.}
	\label{fig_4}
\end{figure}

\begin{figure}[htbp]
	\centering
	\includegraphics[width=1.0\linewidth]{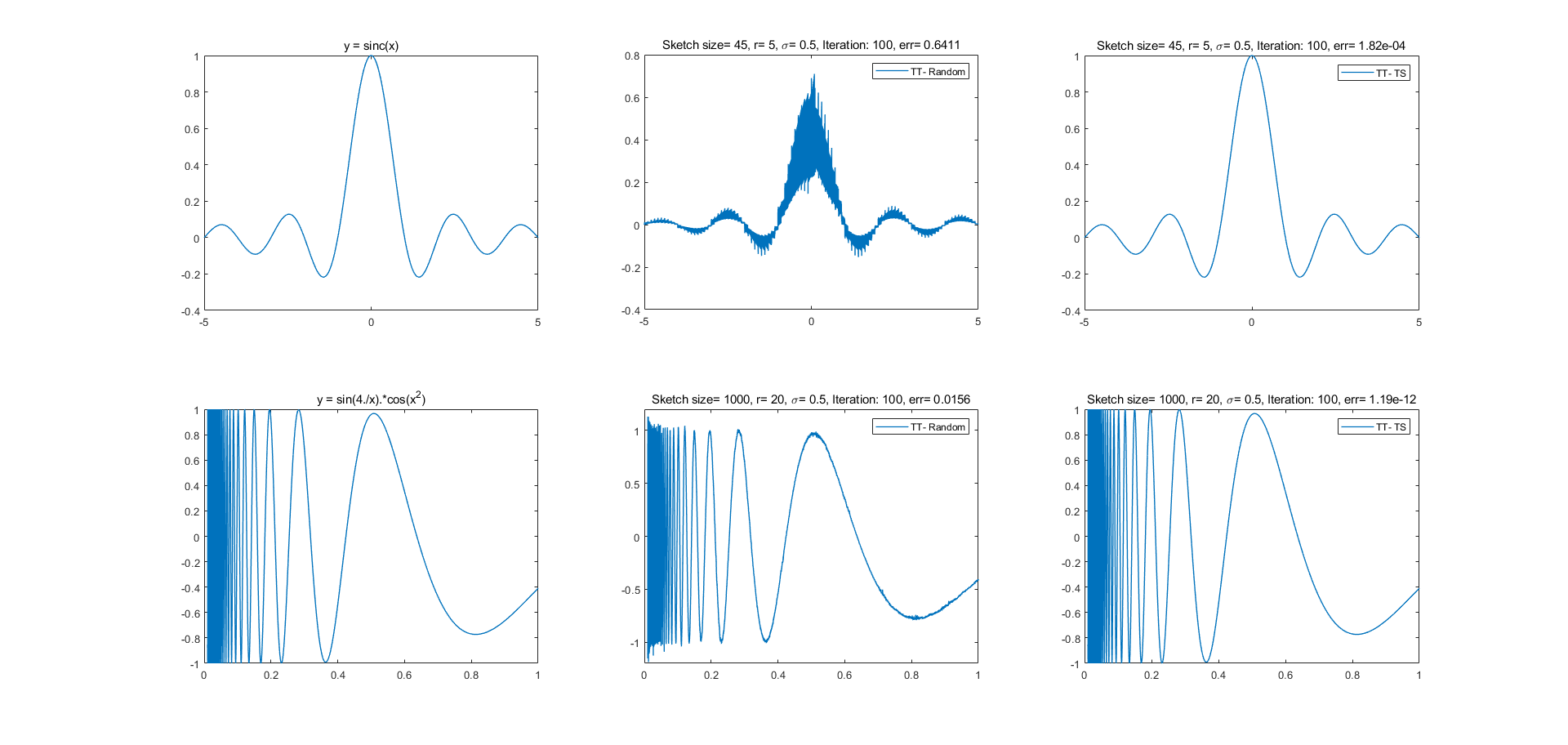}
	\caption{One-dimensional function approximation for $\sigma = 0.5$.}
	\label{fig_5}
\end{figure}

\begin{figure}[htbp]
	\centering
	\includegraphics[width=1.0\linewidth]{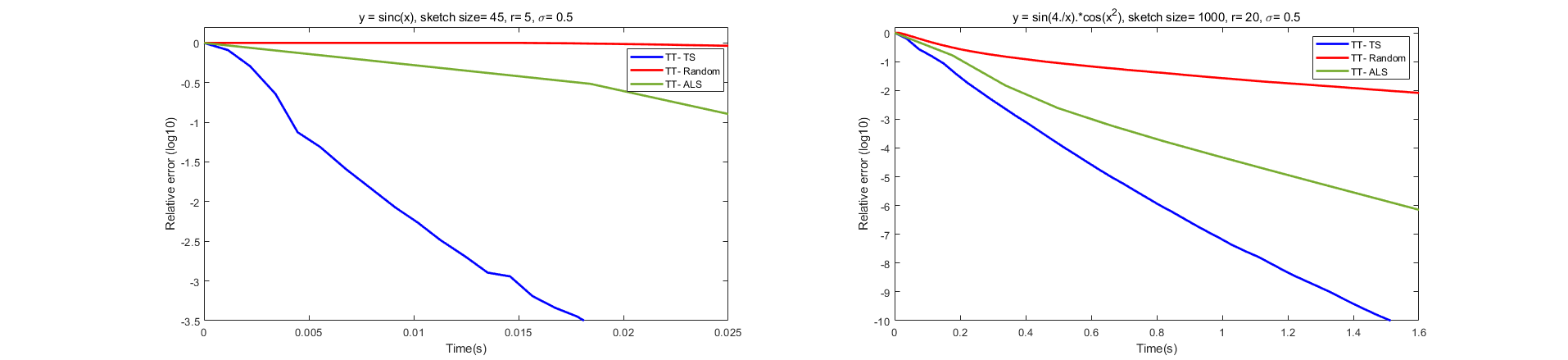}
	\caption{Time vs. relative error for $\mathcal{A}_{2}$ and $\mathcal{A}_{3}$ with $\sigma = 0.5$.}
	\label{fig_6}
\end{figure}

        The experimental results show that the approximations obtained from the TT-TS algorithm are more accurate than that obtained from the TT-Random algorithm at the same number of iterations. In addition, the accuracy of both algorithms is significantly improved after adding the proximal term. To sum up, under the same experimental conditions, our method outperforms the TT-Random algorithm. In terms of time cost, the TT-TS algorithm takes the least amount of time to achieve an accuracy comparable to that of the TT-ALS algorithm. Thus we can conclude that our method is fast and efficient for the low-rank TT approximation of one-dimensional functions.

\subsection{Experimental Results for Real Data} 
        In this section, we consider three real datasets consisting of image, hyperspectral and video data. Next, we provide a brief overview of the data for the experiments, which is summerized in Table \ref{tab2}.
    \begin{itemize}
         \item The first data is $pompoms$, which is an RGB colour image dataset $(512\times512\times3)$ derived from the CAVE databases\footnote{\href{https://www1.cs.columbia.edu/CAVE/databases/multispectral/stuff/}{https://www1.cs.columbia.edu/CAVE/databases/multispectral/stuff/}}, where 512 represents the height and width of the image in pixels, and 3 represents the three color channels (red, green and blue) that make up each pixel.
         \item The second data is $Indian\_pines$, which is a hyperspectral image dataset $(145\times145\times220)$ sourced from Hyperspectral Remote Sensing Scenes\footnote{\href{https://www.ehu.eus/ccwintco/index.php?title=Hyperspectral_Remote_Sensing_Scenes}{https://www.ehu.eus/ccwintco/index.php?title=Hyperspectral\_Remote\_Sensing\_Scenes}}. It is a third-order tensor containing hyperspectral images, where the first two dimensions represent the height and width of the image, and the third dimension represents the number of spectral bands.
         \item The third data is $Skate$, which is a video dataset $(720\times1280\times3\times420)$ sourced from Pixabay\footnote{\href{https://pixabay.com/videos/skate-sport-water-action-exercise-110734/}{https://pixabay.com/videos/skate-sport-water-action-exercise-110734/}}. It is a fourth-order tensor representing color video of a man surfing on the sea. The various dimensions of this tensor represent different aspects of the video data, including its resolution, color space, and frame rate. Here, we selected the information from the first 30 frames.
    \end{itemize}
\begin{table}[htbp]
\caption{Size and type of real data.} \label{tab2}
\centering
\begin{tabular}{ccc}
\hline\hline
Data & Size & Type \\ \hline\hline
$pompoms$ & $512 \times 512 \times 3$ & RGB Image \\ \hline
$Indian\_ pines$ & $145 \times 145 \times 220$ & Hyperspectral Image \\ \hline
$Skate$ & $720 \times 1280 \times 3 \times 30$ & Video \\ \hline
\end{tabular}
\end{table}

For the original image tensor ${\cal A} \in \mathbb{R}^{n_1 \times n_2 \times n_3}$, the quality of the approximate tensor ${\tilde{\cal A}}$ is measured by the peak signal-to-noise ratio (PSNR) which is defined as 
$$ \text{PSNR} = \frac{1}{n_3} \sum_{i_3 =1}^{n_3} 10 \cdot \log_{10} \frac{255^2}{\left\| \mathcal{A}(:,:,i_3)- \tilde{\cal A}(:,:,i_3) \right\|_F^2}. $$
It is reasonable that the deterministic algorithm TT-ALS gives the best quality of approximation since all the information of subproblems is used. However, the computation time of each sweep is much higher than that of randomized algorithms TT-TS and TT-Random, especially for large-scale tensors as shown in Table \ref{tab2}. Here we mainly compare the numerical results of TT-TS and TT-Random, and use the results generated by TT-ALS as baselines. 

For the colour image data, we set the TT-ranks as $\left(1, 50, 3, 1\right)$ and the experimental results are shown in Figure \ref{fig_7}. In addition, we also compare the effect of sketch size and TT-ranks on the experiment as shown in Figure \ref{fig_8}. From Figure \ref{fig_7}, we can clearly see that for TT-TS and TT-Random, the approximations would be more accurate as the sketch size increases. Under the same settings, the experimental results of the TT-TS are always superior to that of the TT-Random. For example, when the sketch size is 200, the PSNR of TT-TS is 30.63 while the PSNR of TT-Random is 29.7. From Figure \ref{fig_8}, we can see that when TT-ranks are set as $\left(1, 100, 3, 1\right)$, TT-TS still outperforms TT-Random at the same sketch size. For both cases, the gap of PSNR between TT-TS and TT-ALS gets smaller and smaller as the sketch size increases.
        

\begin{figure}[htbp]
	\centering	
        \includegraphics[width=1.0\linewidth]{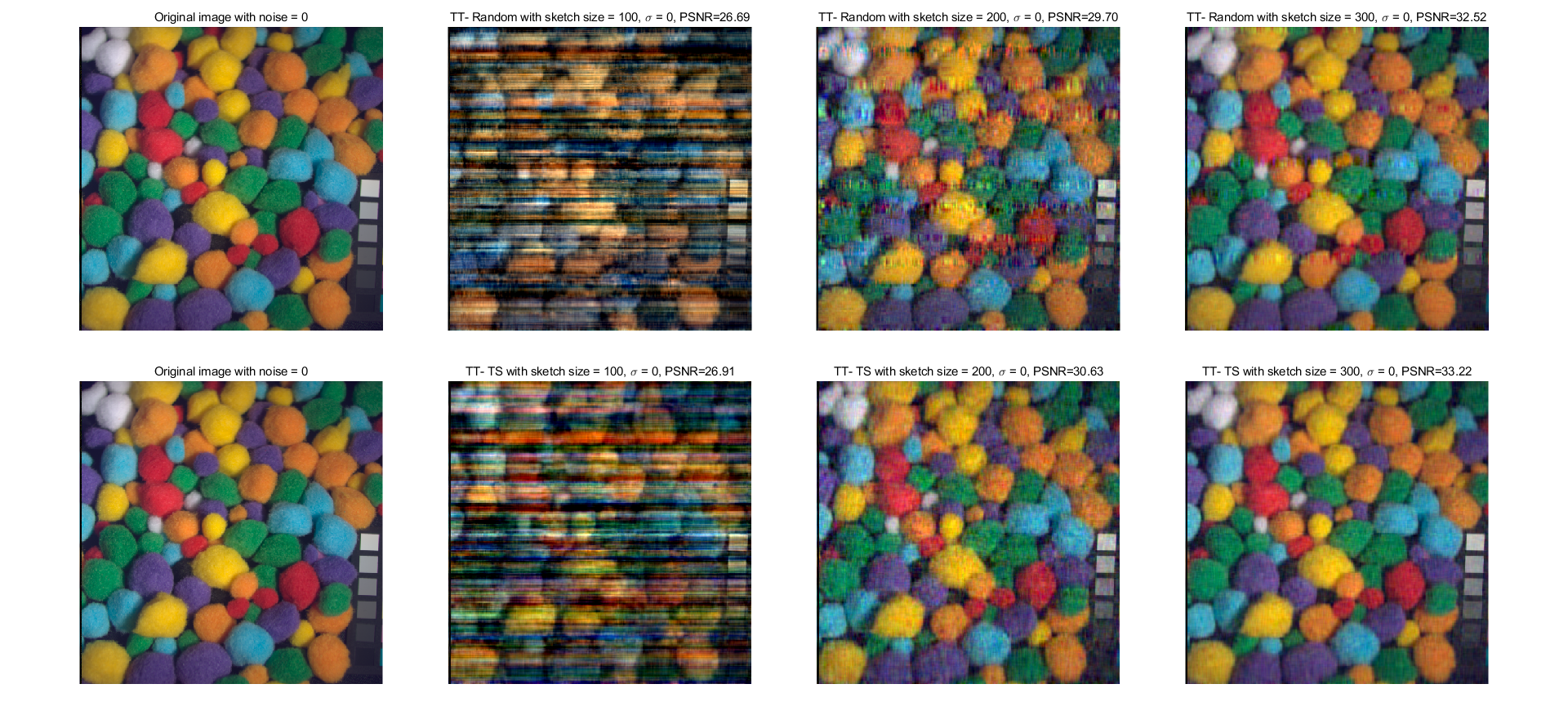}
	\caption{Numerical results for color image with TT-ranks $r=\left(1,50,3,1\right)$.}
	\label{fig_7}
\end{figure}

\begin{figure}[htbp]
	\centering
	\includegraphics[width=1.0\linewidth]{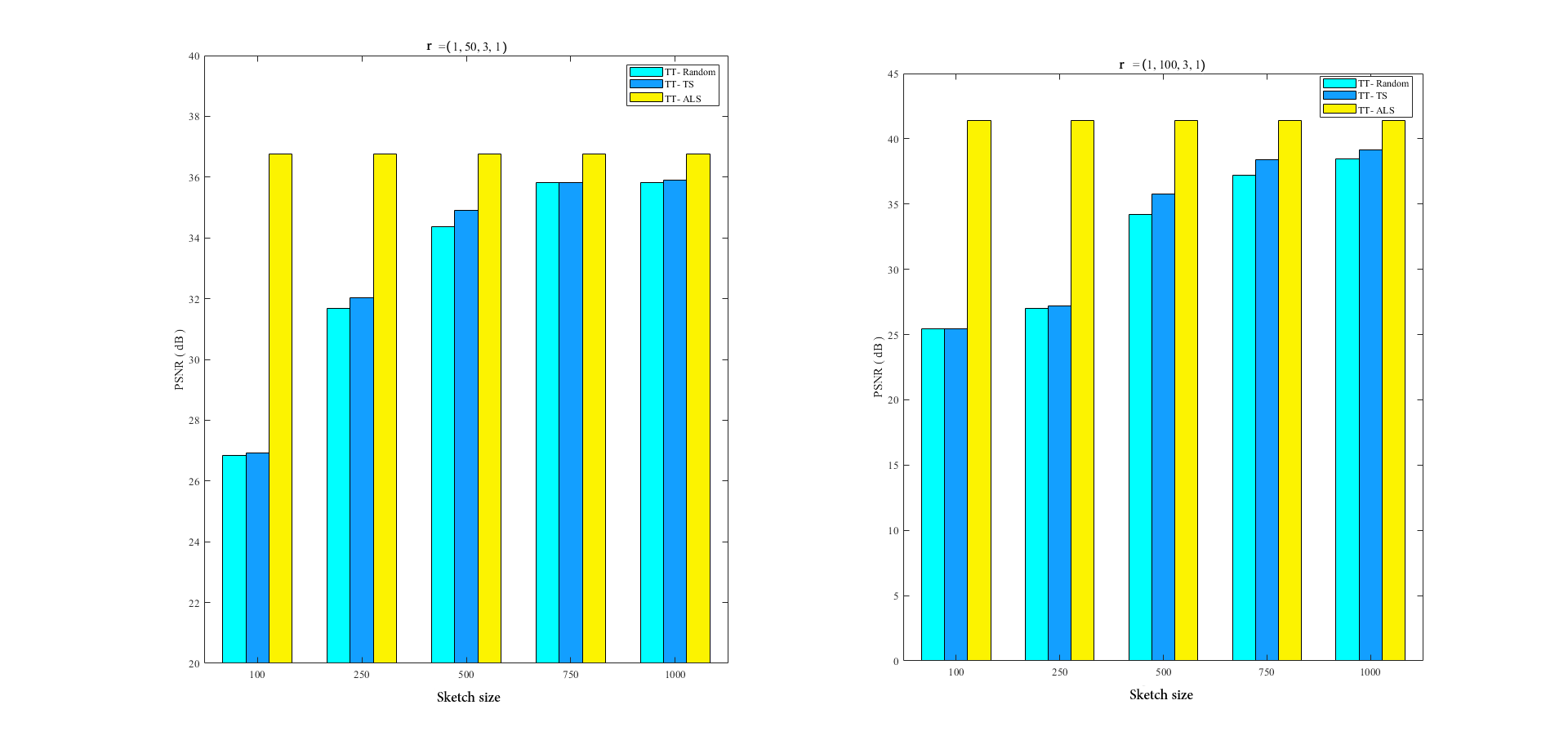}
	\caption{PSNR vs. sketch size for color image with different TT-ranks.}
	\label{fig_8}
\end{figure}

For hyperspectral image data, we set the TT-ranks as $\left(1, 20, 20, 1\right)$ and the experimental results are shown in Figure \ref{fig_9}. We also consider the effect of the three algorithms on the approximation of the original data when the proximal parameter $\sigma = 0.5$, and the numerical results are shown in Figure \ref{fig_10}. In the experiments, we draw the spectral curve of the hyperspectral image for pixel at position $(1, 1)$. The accuracy of approximation is measured by the relative error (denoted by ``err" in the figures) between the original spectral curve and the approximate spectral curve.
As can be seen from Figures \ref{fig_9} and \ref{fig_10}, the approximation of TT-TS is always better than that of TT-Random under the same settings (the values of sketch size, proximal parameter and iteration number). As the sketch size increases, both the approximations of TT-TS and TT-Random become more accurate and the gap between TT-TS and TT-ALS gets smaller and smaller. Besides, both the approximation accuracies of TT-TS and TT-Random are improved by adding the proximal term, which indicates the significance of regularization.

\begin{figure}[htbp]
	\centering
	\includegraphics[width=1.0\linewidth]{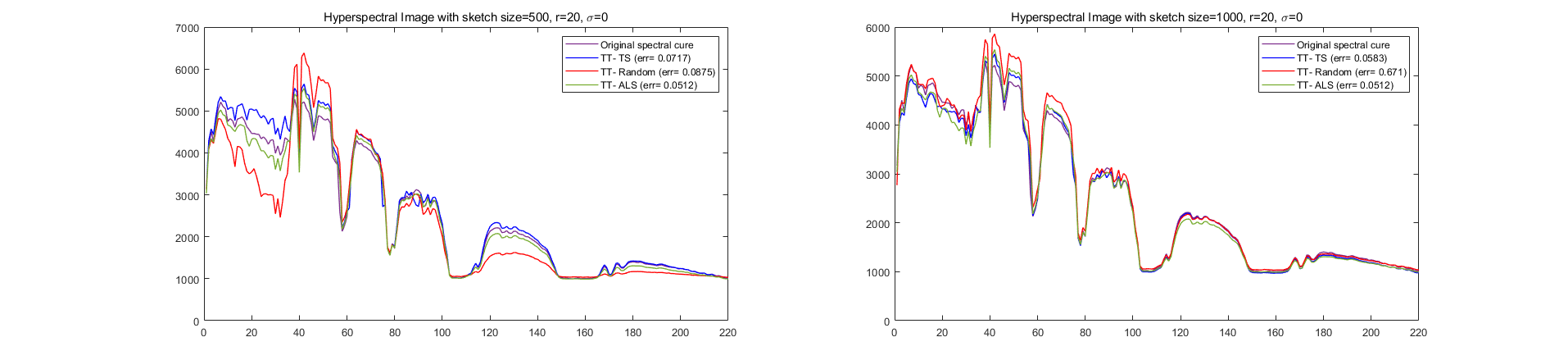}
	\caption{Numerical results for hyperspectral image with $\sigma = 0$.}
	\label{fig_9}
\end{figure}

\begin{figure}[htbp]
	\centering
	\includegraphics[width=1.0\linewidth]{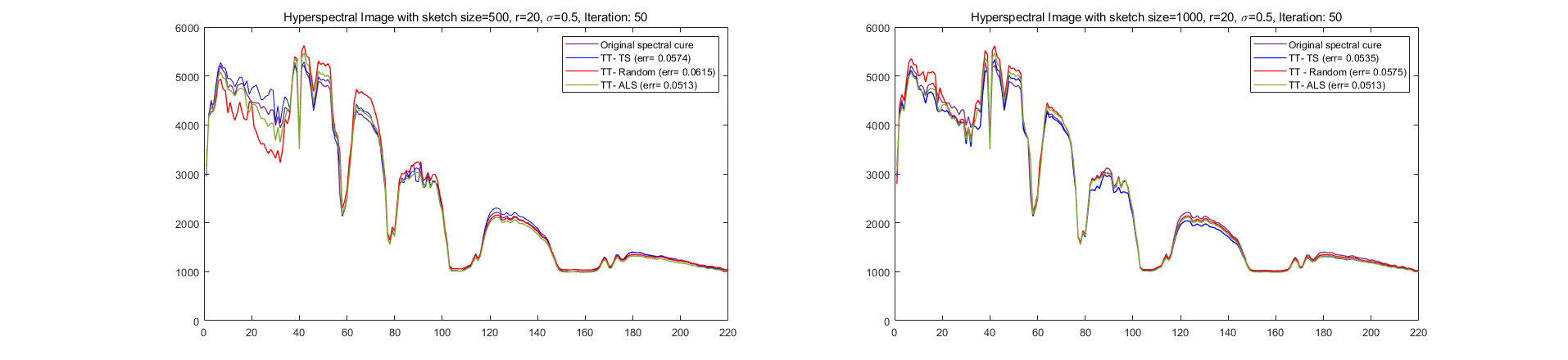}
	\caption{Numerical results for hyperspectral image with $\sigma = 0.5$.}
	\label{fig_10}
\end{figure}

For the video data, we set the TT-ranks as $\left(1, 20, 20, 20, 1\right)$ and $\sigma =0.5$. To verify the efficiency of randomized algorithms, we record the average computation time of each sweep for the tensors generated by the first 10 and 30 frames of the video, respectively. The numerical results of the three algorithms are presented in Table \ref{tab3}. We also compare the approximation results of TT-TS and TT-Random on the first 30 frames of video data under the same sketch size and iteration number. For visualization purpose, we only present the approximation results of the second frame of the video as shown in Figure \ref{fig_11}. From Table \ref{tab3}, we can see that TT-TS and TT-Random take much less time than TT-ALS. As the size of data increases, TT-ALS may run out of memory while the two randomized algorithms are still able to work. According to Figure \ref{fig_11}, the PSNR of TT-TS is 35.23 and the PSNR of TT-Random is 25.1 when $\sigma =0.5$, which demonstrates the superiority of TT-TS over TT-Random under the same conditions.

\begin{table}[htbp]
\footnotesize
\caption{The average computation time of each sweep for video dataset with TT-ranks $r=\left(1,20,20,20,1\right)$ and $\sigma =0.5$.} \label{tab3}
\begin{center}
\begin{tabular}{c|ccccc|ccccc}
\hline 
Data size & \multicolumn{5}{c|}{$720 \times 1280 \times 3 \times 10$} & \multicolumn{5}{c}{$720 \times 1280 \times 3 \times 30$} \\ \hline
Methods & TT-ALS & \multicolumn{2}{c}{TT-Random} & \multicolumn{2}{c|}{TT-TS} & TT-ALS & \multicolumn{2}{c}{TT-Random} & \multicolumn{2}{c}{TT-TS}\\ \hline
Sketch size & All & 1000 & 2000 & 1000 & 2000 & All & 1000 & 2000 & 1000 & 2000 \\ \hline 
Time(s) & 25.67 & 0.07 & 0.12 & 0.04 & 0.06 & out of memory & 0.07 & 0.13 & 0.05 & 0.07\\ \hline
\end{tabular}
\end{center}
\end{table}

\begin{figure}[htbp]
	\centering
	\includegraphics[width=1.0\linewidth]{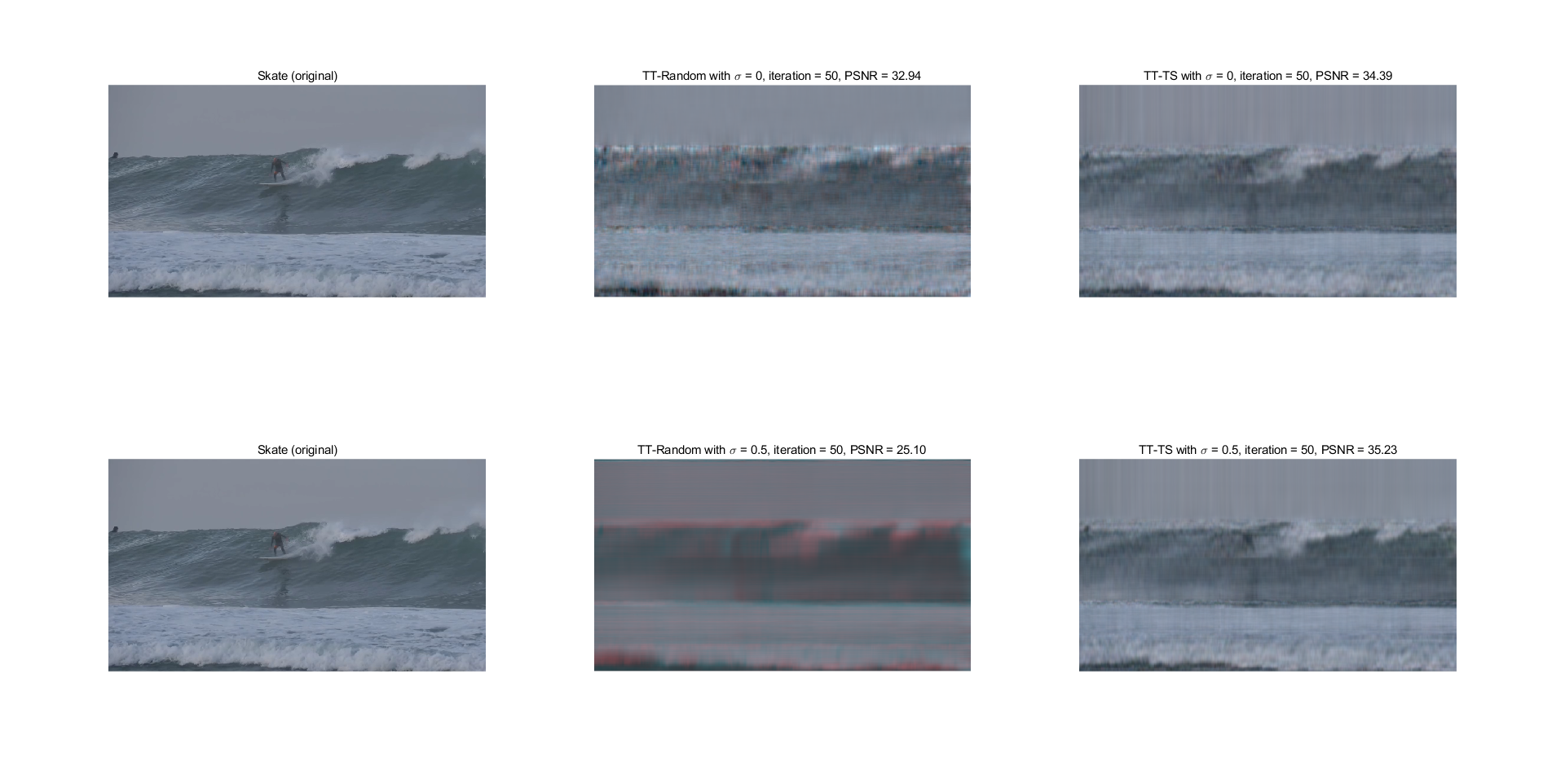}
	\caption{Numerical results for video experiment with sketch size $m=1000$ and TT-ranks $r=\left(1,20,20,20,1\right)$.}
	\label{fig_11}
\end{figure}

\section{Conclusion}
\label{sec_6}
In this paper, we proposed a novel randomized proximal ALS algorithm for low-rank tensor train decomposition by using TensorSketch. The fast computation and accuracy of TensorSketch make our algorithm more practical for computing the low-rank TT decomposition of large-scale tensors. Numerous experiments on both synthetic and real datasets were conducted to demonstrate the effectiveness and efficiency of the proposed algorithm. The numerical results showed the superiority of our algorithm in terms of computation complexity and accuracy for low-rank TT decomposition. On the other hand, we found that the theoretical lower bounds of sketch size are too conservative for our randomized algorithm. Further research on the estimate of sketch size for TensorSketch is needed.


\bibliographystyle{siamplain}
\bibliography{TT-TS}

\end{document}